\newtheorem{theorem}{Theorem}[section]
\newtheorem{proposition}[theorem]{Proposition}
\newtheorem{corollary}[theorem]{Corollary}
\newcommand{\tref}[1]{Theorem~\textup{\ref{#1}}}
\newcommand{\pref}[1]{Proposition~\textup{\ref{#1}}}
\newcommand{\cref}[1]{Corollary~\textup{\ref{#1}}}
\newcommand{\taref}[1]{Table~\textup{\ref{#1}}}
\newcommand{\eref}[1]{Equation~\textup{\ref{#1}}}
\newcommand{\tx}[1]{\textrm{#1}}
\newcommand{\gp}[2]{\langle #1 \mid #2 \rangle}
\newcommand{\s}{\sigma}
\newcommand{\G}{\Gamma}
\newcommand{\comment}[1]{}
\newcommand{\eps}{\epsilon}
\newcommand{\calK}{\mathcal K}
\newcommand{\calL}{\mathcal L}
\newcommand{\calM}{\mathcal M}
\newcommand{\calP}{\mathcal P}
\newcommand{\calQ}{\mathcal Q}
\newcommand{\GP}{\Gamma^+(\calP)}
\newcommand{\GQ}{\Gamma^+(\calQ)}
\newcommand{\GPQ}{\G^+(\calP \mix \calQ)}
\newcommand{\GcPQ}{\G^+(\calP) \comix \G^+(\calQ)}
\newcommand{\mix}{\diamond}
\newcommand{\comix}{\boxempty}
\begin{document}

\title{Mixing Regular Convex Polytopes}

\author{Gabe Cunningham\\
Department of Mathematics\\
Northeastern University\\
Boston, Massachusetts,  USA, 02115
}

\date{ \today }
\maketitle

\begin{abstract}
The mixing operation for abstract polytopes gives a natural way to construct the minimal common
cover of two polytopes. In this paper, we apply this construction to the regular convex
polytopes, determining when the mix is again a polytope, and 
completely determining the structure of the mix in each case.

\vskip.1in
\medskip
\noindent
Key Words: abstract regular polytope, regular convex polytope, Platonic solids, mixing. 

\medskip
\noindent
AMS Subject Classification (2000):  Primary: 51M20.  Secondary:  52B15, 05C25.

\end{abstract}

\section{Introduction}

	Abstract polytopes are combinatorial generalizations of the familiar convex polytopes
	and tessellations of space-forms. Many of the classical geometric constructions carry over
	to the abstract realm. For example, given an abstract polytope $\calP$, we can construct the
	``pyramid'' having $\calP$ as a base, and this construction coincides with the usual pyramid
	construction whenever $\calP$ corresponds to a convex polytope. Other constructions on abstract
	polytopes are new, having no basis in the classical theory. One such example is the mix of
	two polytopes, introduced in \cite{mix-face}; an analogous construction for maps and hypermaps
	appears in \cite{ant2}. 
	The mixing operation is an algebraic construction that finds the minimal natural cover
	of the automorphism group of two regular polytopes. Since there is a standard way to build
	a regular abstract polytope from a group, this construction gives rise to the mix of two polytopes.
	
	By applying the mixing construction to the (abstract versions of the) regular convex polytopes,
	we can find the minimal regular polytopes that cover any subset of the regular convex polytopes.
	Our goal here is to determine their complete structure; how many faces do they have in each rank,
	how many flags are there, and what do their facets and vertex-figures look like? Furthermore,
	we wish to determine which of these new structures are polytopal.
	
	We start by giving some background information on regular abstract polytopes in Section~2. In Section~3,
	we introduce the mixing operation for directly regular polytopes, and we find several criteria
	to determine when the mix of two polytopes is again a polytope. Finally, in Section~4 we
	find the full structure of the mix of any number of regular convex polytopes.

\section{Polytopes}

	General background information on abstract polytopes can be found in \cite[Chs. 2, 3]{arp}.
	Here we review the concepts essential for this paper.

	\subsection{Definition of polytopes}

		Let $\calP$ be a ranked partially ordered set whose elements will be called \emph{faces}. 
		The faces of $\calP$ will range in rank from $-1$ to $n$, and a face of rank $j$ is called a 
		\emph{$j$-face}. The $0$-faces, $1$-faces, and $(n-1)$-faces are also 
		called \emph{vertices}, \emph{edges}, and \emph{facets}, respectively. A \emph{flag} of
		$\calP$ is a maximal chain. We say that two flags are \emph{adjacent} (\emph{$j$-adjacent}) if they differ 
		in exactly one face (their $j$-face, respectively). If $F$ and $G$ are faces of $\calP$
		such that $F \leq G$, then the \emph{section} $G / F$ consists of those faces $H$ such that
		$F \leq H \leq G$.
		
		We say that $\calP$ is an \emph{(abstract) polytope of rank $n$}, also called an \emph{$n$-polytope}, 
		if it satisfies the following four properties:
		
		\begin{enumerate}
		\item There is a unique greatest face $F_n$ of rank $n$ and a unique least face $F_{-1}$ of rank $-1$.
		\item Each flag of $\calP$ has $n+2$ faces.
		\item $\calP$ is \emph{strongly flag-connected}, meaning that if $\Phi$ and $\Psi$
		are two flags of $\calP$, then there is a sequence of flags $\Phi = \Phi_0, \Phi_1, \ldots, \Phi_k = \Psi$ 
		such that for $i = 0, \ldots, k-1$, the flags $\Phi_i$ and $\Phi_{i+1}$ are adjacent, and each
		$\Phi_i$ contains $\Phi \cap \Psi$.
		\item (Diamond condition): Whenever $F < G$, where $F$ is a $(j-1)$-face and $G$ is a $(j+1)$-face
		for some $j$, then there are exactly two $j$-faces $H$ with $F < H < G$.
		\end{enumerate}
		
		Note that due to the diamond condition, any flag $\Phi$ has a unique $j$-adjacent flag (denoted
		$\Phi^j$) for each $j = 0, 1, \ldots, n-1$.
		
		If $F$ is a $j$-face and $G$ is a $k$-face of a polytope with $F \leq G$, then the section $G/F$ is a
		($k-j-1$)-polytope itself. We can identify a face $F$ with the section $F/F_{-1}$; if $F$ is a $j$-face,
		then $F/F_{-1}$ is a $j$-polytope. We call the section $F_n/F$ the \emph{co-face at $F$}. The co-face
		at a vertex is also called a \emph{vertex-figure}. The section $F_{n-1}/F_0$ of a facet over a
		vertex is called a \emph{medial section}. Note that the medial section $F_{n-1}/F_0$ is both a facet
		of the vertex-figure $F_n/F_0$ as well as a vertex-figure of the facet $F_{n-1}/F_{-1}$.

		We sometimes need to work with \emph{pre-polytopes}, which are ranked partially ordered sets that
		satisfy the first, second, and fourth property above, but not necessarily the third. In this paper, all 
		of the pre-polytopes we encounter will be \emph{flag-connected}, meaning that if $\Phi$ and $\Psi$ are two 
		flags, there is a sequence of flags $\Phi = \Phi_0, \Phi_1, \ldots, \Phi_k = \Psi$ such
		that for $i = 0, \ldots, k-1$, the flags $\Phi_i$ and $\Phi_{i+1}$ are adjacent (but we do not require each
		flag to contain $\Phi \cap \Psi$). When working with pre-polytopes, we apply all the same terminology as 
		with polytopes. 
	
	\subsection{Regularity}
	
		For polytopes $\calP$ and $\calQ$, an \emph{isomorphism} from $\calP$ to $\calQ$ is an incidence- and rank-preserving bijection on the set of 
		faces. An isomorphism from $\calP$ to itself is an \emph{automorphism} of $\calP$. We denote the group of 
		all automorphisms of $\calP$ by $\G(\calP)$. There is a natural action of $\G(\calP)$ on the
		flags of $\calP$, and we say that $\calP$ is \emph{regular} if this action is transitive.
		For convex polytopes, this definition is equivalent to any of the usual definitions of regularity.
		
		Given a regular polytope $\calP$, fix a \emph{base flag} $\Phi$. Then the automorphism
		group $\G(\calP)$ is generated by the \emph{abstract reflections} $\rho_0, \ldots, \rho_{n-1}$,
		where $\rho_i$ maps $\Phi$ to the unique flag $\Phi^i$ that is $i$-adjacent to $\Phi$. 
		These generators satisfy $\rho_i^2 = \eps$ for all $i$, and $(\rho_i \rho_j)^2 = \eps$ for all $i$ 
		and $j$ such that $|i - j| \geq 2$.
		We say that $\calP$ has (\emph{Schl\"afli}) \emph{type} $\{p_1,\ldots,p_{n-1}\}$ if
		for each $i = 1, \ldots, n-1$ the order of $\rho_{i-1} \rho_i$ is $p_i$ (with $2 \leq p_i \leq \infty$).
		We also use $\{p_1, \ldots, p_{n-1}\}$ to represent the universal regular polytope of this type,
		which has an automorphism group with no relations other than those mentioned above.
		We denote the (Coxeter) group $\G(\{p_1, \ldots, p_{n-1}\})$ by $[p_1, \ldots, p_{n-1}]$.
		Whenever this universal polytope corresponds to a regular convex polytope, then the name used
		here is the same as the usual Schl\"afli symbol for that polytope (see \cite{coxeter}).
		
		For $I \subseteq \{0, 1, \ldots, n-1\}$ and a group $\G = \langle \rho_0, \ldots, \rho_{n-1} \rangle$,
		we define $\G_I := \langle \rho_i \mid i \in I \rangle$. 
		The strong flag-connectivity of polytopes induces the following {\em intersection property\/} in the group:
		\begin{equation}
		\label{eq:reg-int}
		\G_I \cap \G_J = \G_{I \cap J} 
		\;\; \textrm{ for } I,J \subseteq \{0, \ldots, n-1\}.
		\end{equation}

		In general, if $\G = \langle \rho_0, \ldots, \rho_{n-1} \rangle$ is a group such that each
		$\rho_i$ has order $2$ and such that $(\rho_i \rho_j)^2 = \eps$ whenever $|i - j| \geq 2$, then
		we say that $\G$ is a \emph{string group generated by involutions} (or \emph{sggi}). If
		$\G$ also satisfies the intersection property given above, then we call $\G$ a \emph{string
		C-group}. There is a natural way of building a regular polytope $\calP(\G)$ from a string
		C-group $\G$ such that $\G(\calP(\G)) = \G$ (see \cite[Ch. 2E]{arp}). Therefore, we get a one-to-one 
		correspondence between regular $n$-polytopes and string C-groups on $n$ specified generators.
		
		Given a regular polytope $\calP$ with automorphism group $\G(\calP) = \langle \rho_0, \ldots, \rho_{n-1}
		\rangle$, we define the \emph{abstract rotations} $\s_i := \rho_{i-1} \rho_i$ for $i=1, \ldots, n$. 
		These elements generate the \emph{rotation subgroup} $\GP$ of $\G(\calP)$, which has index at 
		most~$2$. We say that $\calP$ is \emph{directly regular} if this index is $2$. Note that the regular
		convex polytopes are all directly regular, and that any section of a directly regular
		polytope is directly regular.
	
		The rotation subgroup of a directly regular polytope satisfies the relations 
		\begin{equation}
		\label{eq:tau}
		(\s_i \cdots \s_j)^2 = \eps \tx{ for $i < j$.}
		\end{equation}
		It also satisfies an intersection property analogous to that for the automorphism groups of
		regular polytopes. For $1 \leq i < j \leq n-1$ define $\tau_{i,j}:= \s_i \cdots \s_j$.
		By convention, we also define $\tau_{i,i} = \s_i$, and for $0 \leq i \leq n$, we define $\tau_{0,i} =
		\tau_{i,n} = \eps$.
		For $I \subseteq \{0, \ldots, n-1\}$ and $\G^+ := \GP$, set
		\[ \G^+_I := \langle \tau_{i,j} \mid i \leq j \tx{ and } i-1, j \in I \rangle.  \]
		Then the \emph{intersection property} for $\G^+$ is given by:
		\begin{equation}
		\label{eq:chiral-int}
		\G^+_I \cap \G^+_J = \G^+_{I\cap J}  
		\;\; \textrm{ for } I,J \subseteq \{0, \ldots, n-1\}.
		\end{equation}
	
		Let $\calP$ and $\calQ$ be two polytopes (or flag-connected pre-polytopes) of the same rank, not 
		necessarily regular. A function $\gamma: \calP \to \calQ$ is called a
		\emph{covering} if it preserves incidence of faces, ranks of faces, and adjacency of flags; then $\gamma$ is
		necessarily surjective, by the flag-connectedness of $\calQ$. We say that $\calP$ \emph{covers} $\calQ$
		if there exists a covering $\gamma: \calP \to \calQ$.

		If $\calP$ and $\calQ$ are directly regular, then their rotation groups
		are both quotients of 
		\[ W^+ := \langle \s_1, \ldots, \s_{n-1} \mid (\s_i \cdots \s_j)^2 = \eps \tx{ for $1 \leq i < j \leq n-1$}
		\rangle. \]
		Therefore there are normal subgroups $M$ and $K$ of $W^+$ such that $\GP = W^+/M$ and $\GQ = W^+/K$. Then
		$\calP$ covers $\calQ$ if and only if $M \leq K$.
		
\section{Mixing polytopes}

	In this section, we will define the mix of two finitely presented groups, which naturally
	gives rise to a way to mix polytopes. The mixing operation is analogous to the join of hypermaps 
	\cite{ant2} and the parallel product of maps \cite{wilson}.
	
	Let $\G = \langle x_1, \ldots, x_n \rangle$ and $\G' =
	\langle x_1', \ldots, x_n' \rangle$ be groups with $n$ specified generators. Then the elements
	$z_i = (x_i, x_i') \in \G \times \G'$ (for $i = 1, \ldots, n$) generate a subgroup of
	$\G \times \G'$ that we call the \emph{mix} of $\G$ and $\G'$ and denote $\G \mix \G'$
	(see \cite[Ch.7A]{arp}).

	If $\calP$ and $\calQ$ are directly regular $n$-polytopes, we can mix their automorphism groups or
	their rotation groups. The theory is essentially the same in either case, but it ends up being easier
	to use their rotation groups. Let $\GP = \langle \s_1, \ldots, \s_{n-1} \rangle$ and $\GQ = \langle
	\s_1', \ldots, \s_{n-1}' \rangle$. Let $\beta_i = (\s_i, \s_i')$ for $i = 1, \ldots, n-1$.
	Then $\GP \mix \GQ = \langle \beta_1, \ldots, \beta_{n-1} \rangle$. We note that for $i < j$, we have
	$(\beta_i \cdots \beta_j)^2 = \eps$, so that the group $\GP \mix \GQ$ satisfies Equation~\ref{eq:tau}.
	In general, however, it will not have the intersection property (\eref{eq:chiral-int}) with respect to its
	generators $\beta_1, \ldots, \beta_{n-1}$. Nevertheless, it is possible to build a directly regular poset from
	$\GP \mix \GQ$ using the method outlined in \cite{chiral}, and we denote that poset $\calP \mix \calQ$
	and call it the \emph{mix} of $\calP$ and $\calQ$. (In fact, this poset is always a flag-connected
	pre-polytope.) Thus $\GPQ = \GP \mix \GQ$. If $\GP \mix \GQ$ satisfies
	the intersection property, then $\calP \mix \calQ$ is in fact a polytope.
	
	If the facets of $\calP$ are $\calK$ and the facets of $\calQ$ are $\calK'$, then the facets of
	$\calP \mix \calQ$ are $\calK \mix \calK'$. The vertex-figures of $\calP \mix \calQ$ are obtained
	similarly.

	The following proposition is proved in \cite{const}:

	\begin{proposition}
	\label{prop:mix}
	Let $\calP$ and $\calQ$ be directly regular polytopes with $\GP = W^+/M$ and
	$\GQ = W^+/K$. Then $\GPQ \simeq W^+/(M \cap K)$.
	\end{proposition}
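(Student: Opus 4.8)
The plan is to realize $\GPQ$ as the image of one natural homomorphism out of $W^+$ and then apply the first isomorphism theorem. Let $\pi_M \colon W^+ \to W^+/M = \GP$ and $\pi_K \colon W^+ \to W^+/K = \GQ$ be the canonical projections, and define $\pi \colon W^+ \to \GP \times \GQ$ by $\pi(w) = (\pi_M(w), \pi_K(w))$. Since each coordinate map is a homomorphism, so is $\pi$, and there is no well-definedness issue because $\pi$ is defined directly on $W^+$.

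First I would identify $\im \pi$. Because $W^+ = \langle \s_1, \ldots, \s_{n-1} \rangle$, the image $\im \pi$ is generated by the elements $\pi(\s_i) = (\s_i M, \s_i K)$ for $i = 1, \ldots, n-1$. Under the identifications $\GP = \langle \s_1, \ldots, \s_{n-1} \rangle$ and $\GQ = \langle \s_1', \ldots, \s_{n-1}' \rangle$, these are precisely the generators $\beta_i = (\s_i, \s_i')$ of the mix, so $\im \pi = \langle \beta_1, \ldots, \beta_{n-1} \rangle = \GP \mix \GQ$, which by the construction recalled above is exactly $\GPQ$.

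Next I would compute $\ker \pi$: an element $w \in W^+$ satisfies $\pi(w) = \eps$ iff $\pi_M(w) = \eps$ and $\pi_K(w) = \eps$, i.e.\ iff $w \in \ker \pi_M = M$ and $w \in \ker \pi_K = K$. Hence $\ker \pi = M \cap K$. As $M$ and $K$ are normal in $W^+$, so is $M \cap K$, and the first isomorphism theorem yields $W^+/(M \cap K) \cong \im \pi = \GPQ$, as claimed.

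The argument is essentially a matter of unwinding definitions, so there is no serious obstacle; the only point deserving a line of care is the identification, used in the second step, of the abstract subgroup $\GP \mix \GQ \leq \GP \times \GQ$ with the rotation group $\GPQ$ of the poset $\calP \mix \calQ$. But this holds by the very way $\calP \mix \calQ$ was defined — its rotation group was declared to be $\GP \mix \GQ$ — so nothing further is needed.
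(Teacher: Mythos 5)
Your argument is correct and complete: the diagonal homomorphism $\pi\colon W^+ \to \GP \times \GQ$ has image generated by the $\beta_i$, hence equal to $\GP \mix \GQ = \GPQ$, and kernel exactly $M \cap K$, so the first isomorphism theorem gives the claim. Note that the paper itself offers no proof of this proposition --- it is quoted from the reference \cite{const} --- so there is nothing internal to compare against; your proof is the standard one that any source would give, and the one point you flag (identifying the subgroup $\GP \mix \GQ$ of the direct product with the rotation group of the poset $\calP \mix \calQ$) is indeed settled by the paper's definition, which declares $\GPQ = \GP \mix \GQ$.
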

	
	In most of the cases we will encounter in this paper, $\GP \mix \GQ$ is in fact equal to $\GP \times \GQ$.
	In order to determine when that happens, it is useful to introduce the \emph{comix} of two groups.
	If $\G$ has presentation $\gp{x_1, \ldots, x_n}{R}$ and $\G'$ has presentation $\gp{x_1', \ldots, x_n'}{S}$,
	then we define the comix of $\G$ and $\G'$, denoted $\G \comix \G'$, to be the group with presentation
	\[ \gp{x_1, x_1', \ldots, x_n, x_n'}{R, S, x_1^{-1}x_1', \ldots, x_n^{-1}x_n'}.\]
	Informally speaking, we can just add the relations from $\G'$ to $\G$, rewriting them to use
	$x_i$ in place of $x_i'$. 

	Just as the mix of two rotation groups has a simple description in terms of quotients of $W^+$, so
	does the comix of two rotation groups.

	\begin{proposition}
	\label{prop:comix}
	Let $\calP$ and $\calQ$ be directly regular polytopes with $\GP = W^+/M$ and
	$\GQ = W^+/K$. Then $\GcPQ \simeq W^+/MK$.
	\end{proposition}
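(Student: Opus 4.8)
The plan is to work directly with presentations and exploit the universal property of quotient groups. Write $W^+ = \langle \s_1, \ldots, \s_{n-1} \mid T \rangle$, where $T$ denotes the defining relations $(\s_i \cdots \s_j)^2 = \eps$ for $1 \leq i < j \leq n-1$. Since $\GP = W^+/M$, the group $\GP$ has a presentation $\langle \s_1, \ldots, \s_{n-1} \mid T, R\rangle$ where $R$ is any set of words whose normal closure in $W^+$ is $M$; similarly $\GQ$ has presentation $\langle \s_1', \ldots, \s_{n-1}' \mid T', S \rangle$ with normal closure of $S$ equal to $K$ (here $T'$ is $T$ rewritten in the primed generators). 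Applying the definition of the comix verbatim, $\GcPQ$ has presentation
\[
\langle \s_1, \s_1', \ldots, \s_{n-1}, \s_{n-1}' \mid T, T', R, S, \s_1^{-1}\s_1', \ldots, \s_{n-1}^{-1}\s_{n-1}' \rangle.
\]

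Next I would simplify this presentation using Tietze transformations. The relations $\s_i^{-1}\s_i' = \eps$ let us eliminate each primed generator $\s_i'$, replacing it by $\s_i$ throughout. After this substitution, $T'$ becomes identical to $T$ (redundant), $S$ becomes the set of words obtained from $S$ by replacing each $\s_i'$ with $\s_i$, and the eliminated relations disappear. The resulting presentation is $\langle \s_1, \ldots, \s_{n-1} \mid T, R, S \rangle$, where now $R$ and $S$ are both regarded as words in the $\s_i$. This is precisely a presentation of the quotient of $W^+$ by the normal closure of $R \cup S$. Since the normal closure of $R$ is $M$ and the normal closure of $S$ (viewed in the unprimed generators) is $K$, the normal closure of $R \cup S$ is exactly $MK$ (the product of two normal subgroups is their join, i.e.\ the smallest normal subgroup containing both). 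Hence $\GcPQ \simeq W^+/MK$.

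The only delicate point is making sure the Tietze elimination is carried out cleanly: one must check that after substituting $\s_i'$ by $\s_i$ there is nothing left referring to the primed symbols, and that $T'$ really does coincide with $T$ under this substitution — both of which are immediate since $T$ and $T'$ have literally the same shape. A second minor point worth stating explicitly is the group-theoretic fact that if $M = \langle\!\langle R \rangle\!\rangle$ and $K = \langle\!\langle S \rangle\!\rangle$ are normal closures in a group $H$, then $\langle\!\langle R \cup S \rangle\!\rangle = MK$; this holds because $MK$ is a subgroup (as $M, K \trianglelefteq H$), it is normal, it contains $R$ and $S$, and conversely any normal subgroup containing $R \cup S$ contains both $M$ and $K$ hence $MK$. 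Neither step is a genuine obstacle; the proof is essentially a bookkeeping exercise with presentations, exactly parallel to the proof of \pref{prop:mix} but with ``intersection'' replaced by ``product.''
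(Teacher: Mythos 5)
Your proof is correct and follows essentially the same route as the paper's: rewrite $\GcPQ$ as the one-sorted presentation $\langle \s_1, \ldots, \s_{n-1} \mid R \cup S\rangle$ (the paper does this implicitly, you via explicit Tietze moves) and then verify that the normal closure of $R \cup S$ in $W^+$ is $MK$ by the same two-sided containment argument. The extra care you take with the Tietze elimination is a welcome but inessential elaboration of a step the paper treats as immediate.
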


	\begin{proof}
	Let $\GP = \langle \s_1, \ldots, \s_{n-1} \mid R \rangle$, and let $\GQ = \langle \s_1, \ldots, \s_{n-1} \mid
	S \rangle$, where $R$ and $S$ are sets of relators in $W^+$. 
	Then $M$ is the normal closure of $R$ in $W^+$ and $K$ is the normal closure of $S$ in $W^+$.
	We can write $\GcPQ = \langle \s_1, \ldots, \s_{n-1} \mid R \cup S \rangle$, so we want to show that $MK$ is
	the normal closure of $R \cup S$ in $W^+$. It is clear that $MK$ contains $R \cup S$, and since
	$M$ and $K$ are normal, $MK$ is normal, and so it contains the normal closure of $R \cup S$.
	To show that $MK$ is contained in the normal closure of $R \cup S$, it suffices to show that if
	$N$ is a normal subgroup of $W^+$ that contains $R \cup S$, then it must also contain $MK$. Clearly,
	such an $N$ must contain the normal closure $M$ of $R$ and the normal closure $K$ of $S$. Therefore,
	$N$ contains $MK$, as desired.
	\end{proof}

	\subsection{Size of the Mix}

		Now we can determine how the size of $\GP \mix \GQ$ is related to the size of $\GP \comix \GQ$.

		\begin{proposition}
		\label{prop:SizeOfMix}
		Let $\calP$ and $\calQ$ be finite directly regular $n$-polytopes. Then 
		\[ |\GP \mix \GQ| \cdot |\GcPQ| = |\GP| \cdot |\GQ|. \]
		\end{proposition}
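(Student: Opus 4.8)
The plan is to express everything in terms of quotients of $W^+$ using the two propositions just established. Write $\GP = W^+/M$ and $\GQ = W^+/K$. By \pref{prop:mix}, $\GPQ \simeq W^+/(M\cap K)$, and by \pref{prop:comix}, $\GcPQ \simeq W^+/MK$. Since $\calP$ and $\calQ$ are finite, all four of the groups $W^+/M$, $W^+/K$, $W^+/(M\cap K)$, and $W^+/MK$ are finite, so in particular $M$ and $K$ have finite index in $W^+$; this is what makes the counting below legitimate.

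The key step is the purely group-theoretic identity
\[ [W^+ : M\cap K] \cdot [W^+ : MK] = [W^+ : M] \cdot [W^+ : K]. \]
First I would reduce to finite groups: passing to the finite quotient $W^+/(M\cap K)$, the images $\bar M = M/(M\cap K)$ and $\bar K = K/(M\cap K)$ are subgroups with $\bar M \cap \bar K = \{\eps\}$ and $\bar M \bar K = MK/(M\cap K)$, and all the relevant indices are unchanged. So it suffices to prove: if $A$ and $B$ are subgroups of a finite group $G$ with $A\cap B = \{\eps\}$, then $[G:A]\cdot[G:B] = [G:A][G:B]$ — trivially true, but more to the point I want $|AB|\cdot|A\cap B| = |A|\cdot|B|$, which is the standard product formula for the size of the (not necessarily subgroup) set $AB = \{ab : a\in A, b\in B\}$. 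Here, crucially, $MK$ \emph{is} a subgroup because $M$ (indeed both) is normal in $W^+$, so $|MK| = |M|\,|K|/|M\cap K|$ holds with $MK$ a genuine subgroup. Dividing the identity $|W^+/(M\cap K)| \cdot |W^+/MK| = |W^+/M|\cdot|W^+/K|$ out from $|W^+|^2$ — or equivalently working directly with indices via $[W^+:M\cap K] = [W^+:M][M:M\cap K]$ and $[M:M\cap K] = [MK:K]$ (the second isomorphism theorem, using normality) — gives the displayed index identity.

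Finally I would translate back: $|\GPQ| = [W^+ : M\cap K]$, $|\GcPQ| = [W^+:MK]$, $|\GP| = [W^+:M]$, $|\GQ| = [W^+:K]$, and substituting into the index identity yields exactly $|\GPQ|\cdot|\GcPQ| = |\GP|\cdot|\GQ|$, which is the claim.

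I do not expect a serious obstacle here; the only point requiring a little care is justifying the second-isomorphism-theorem step $[M : M\cap K] = [MK : K]$ (equivalently $M/(M\cap K) \cong MK/K$), which needs $K \trianglelefteq W^+$ so that $MK$ is a subgroup and $K \trianglelefteq MK$ — and this normality is exactly what \pref{prop:comix} and \pref{prop:mix} were set up to provide. A secondary point worth a sentence is the reduction from possibly infinite $W^+$ to finite indices, i.e., noting that finiteness of $\calP$ and $\calQ$ forces $[W^+:M]$ and $[W^+:K]$ finite, hence $[W^+:M\cap K] \le [W^+:M][W^+:K]$ finite as well, so every index appearing is a finite positive integer and the cancellation is valid.
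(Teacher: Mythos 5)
Your argument is correct and is essentially the paper's own proof in different packaging: both reduce to the quotients $W^+/M$, $W^+/K$, $W^+/(M\cap K)$, $W^+/MK$ via Propositions~\ref{prop:mix} and~\ref{prop:comix} and then invoke the second isomorphism theorem $M/(M\cap K)\simeq MK/K$ (the paper phrases this as the isomorphism of the kernels of the natural epimorphisms $\GP\mix\GQ\to\GP$ and $\GQ\to\GcPQ$, you phrase it as an index identity). Your explicit remark that finiteness of $\calP$ and $\calQ$ makes all the indices finite is a small point the paper leaves implicit, but it does not change the substance.
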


		\begin{proof}
		Let $\GP = W^+/M$ and $\GQ = W^+/K$. Then by \pref{prop:mix}, $\GP \mix \GQ = W^+/(M \cap K)$, and 
		by \pref{prop:comix}, $\GcPQ = W^+/MK$.
		Let $\pi_1: \GP \mix \GQ \to \GP$ and $\pi_2: \GQ \to \GcPQ$ be the natural epimorphisms. Then
		$\ker \pi_1 \simeq M/(M \cap K)$ and $\ker \pi_2 \simeq MK/K \simeq M/(M \cap K)$. Therefore,
		we have that 
		\[|\GP \mix \GQ| = |\GP||\ker \pi_1| = |\GP||\ker \pi_2| = |\GP||\GQ|/|\GcPQ|,\]
		and the result follows.
		\end{proof}
		
		The following corollary is immediate.
		
		\begin{corollary}
		\label{cor:flags-mix}
		Let $\calP$ and $\calQ$ be finite directly regular $n$-polytopes such that $\GP \comix \GQ$
		is trivial. Then $\GP \mix \GQ = \GP \times \GQ$. Furthermore, if $\calP$ has $g$ flags and
		$\calQ$ has $h$ flags, then $\calP \mix \calQ$ has $gh/2$ flags.
		\end{corollary}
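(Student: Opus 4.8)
The plan is to obtain both assertions as essentially immediate consequences of \pref{prop:SizeOfMix}, with only a little bookkeeping about flag counts.

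First I would feed the hypothesis into \pref{prop:SizeOfMix}: since $\GcPQ$ is trivial, the identity $|\GP \mix \GQ| \cdot |\GcPQ| = |\GP| \cdot |\GQ|$ collapses to $|\GP \mix \GQ| = |\GP| \cdot |\GQ|$. But $\GP \mix \GQ$ is by its very definition the subgroup of $\GP \times \GQ$ generated by the pairs $\beta_i = (\s_i, \s_i')$, and a subgroup of a finite group having the same order as the ambient group is the whole group; hence $\GP \mix \GQ = \GP \times \GQ$.

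For the flag count I would invoke the standard fact that a finite directly regular $n$-polytope (or pre-polytope) $\calR$ built from a rotation group $\GR$ has exactly $2|\GR|$ flags, since its full automorphism group has order $2|\GR|$ and acts freely and transitively on the flags. Applying this to $\calP$ and $\calQ$ gives $g = 2|\GP|$ and $h = 2|\GQ|$, and applying it to the directly regular pre-polytope $\calP \mix \calQ$ --- whose rotation group is $\GPQ = \GP \mix \GQ$ --- gives that $\calP \mix \calQ$ has $2|\GPQ|$ flags. Substituting $|\GPQ| = |\GP \mix \GQ| = |\GP| \cdot |\GQ| = (g/2)(h/2)$ then yields $2|\GPQ| = gh/2$, as claimed.

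There is no real obstacle here; the only point that deserves a moment's care is checking that the ``number of flags equals $2|\G^+|$'' bookkeeping genuinely applies to $\calP \mix \calQ$, which it does because the poset attached to $\GP \mix \GQ$ is a directly regular flag-connected pre-polytope, exactly the setting in which that count remains valid even when $\calP \mix \calQ$ fails to be a polytope.
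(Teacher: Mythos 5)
Your proposal is correct and follows exactly the route the paper intends: the paper simply declares the corollary ``immediate'' from \pref{prop:SizeOfMix}, and your argument (trivial comix forces $|\GP \mix \GQ| = |\GP|\,|\GQ|$, hence equality with $\GP \times \GQ$ by the subgroup-of-equal-order observation, plus the standard count of $2|\G^+|$ flags for a directly regular pre-polytope) is precisely the bookkeeping being left to the reader.
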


		We conclude this section by determining some cases for which $\GP \comix \GQ$ is indeed trivial.
		
		\begin{proposition}
		\label{prop:trivial}
		Let $\calP$ and $\calQ$ be directly regular $n$-polytopes. Let $\GP \comix \GQ = \langle \s_1, \ldots,
		\s_{n-1} \rangle$. Suppose $\s_i$ is trivial for some $i$. If $i \geq 2$, then $\s_{i-1}$ 
		has order $1$ or $2$, and if $i \leq n-2$, then $\s_{i+1}$ has order $1$ or $2$.
		\end{proposition}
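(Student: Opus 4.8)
The plan is to work entirely inside the group $\GcPQ$, using only the relations it inherits as a quotient of $W^+$. Since $\calP$ and $\calQ$ are directly regular $n$-polytopes, \pref{prop:comix} gives $\GcPQ \simeq W^+/MK$ for the normal subgroups $M, K$ of $W^+$ with $\GP = W^+/M$ and $\GQ = W^+/K$; in particular the generators $\s_1, \ldots, \s_{n-1}$ of $\GcPQ$ satisfy every defining relation of $W^+$, and the only ones I will need are the relations $(\s_j \cdots \s_k)^2 = \eps$ for $1 \le j < k \le n-1$ from \eref{eq:tau}.

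For the first assertion I would suppose $i \ge 2$. Then $i-1$ and $i$ are both indices in $\{1, \ldots, n-1\}$ with $i-1 < i$, so the relation of \eref{eq:tau} with $(j,k) = (i-1,i)$ holds in $\GcPQ$, namely $(\s_{i-1}\s_i)^2 = \eps$. Substituting the hypothesis $\s_i = \eps$ collapses this word to $\s_{i-1}^2 = \eps$, so $\s_{i-1}$ has order $1$ or $2$. The second assertion goes the same way: if $i \le n-2$ then $(j,k) = (i,i+1)$ is an admissible pair, the relation $(\s_i \s_{i+1})^2 = \eps$ holds in $\GcPQ$, and $\s_i = \eps$ reduces it to $\s_{i+1}^2 = \eps$.

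Because the whole argument amounts to two substitutions into relations already known to hold in $\GcPQ$, I do not expect any genuine obstacle; the only point needing care is matching the index restriction $1 \le j < k \le n-1$ in \eref{eq:tau} to the hypotheses $i \ge 2$ and $i \le n-2$ in the statement, and these align exactly. As an alternative that sidesteps citing \pref{prop:comix}, one can argue straight from the presentation of $\GcPQ$: its relator set includes the relators of $\GP$, and these already contain the words $(\s_j \cdots \s_k)^2$ because $\GP$ is itself a quotient of $W^+$; the same two substitutions then finish the proof.
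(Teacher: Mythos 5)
Your proposal is correct and follows essentially the same route as the paper: substitute $\s_i = \eps$ into the relation $(\s_{i-1}\s_i)^2 = \eps$ (respectively $(\s_i\s_{i+1})^2 = \eps$) to get $\s_{i-1}^2 = \eps$ (respectively $\s_{i+1}^2 = \eps$). The only difference is that you spell out why these relations hold in $\GcPQ$ (via \pref{prop:comix} or directly from the presentation), which the paper leaves implicit.
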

		
		\begin{proof}
		If $\s_i = \eps$ for $i \geq 2$, then the relation $(\s_{i-1} \s_i)^2 = \eps$
		reduces to $\s_{i-1}^2 = \eps$, and thus $\s_{i-1}$ has order $1$ or $2$. The proof for $\s_{i+1}$ is the same.
		\end{proof}
		
		\begin{corollary}
		\label{cor:gcd-comix}
		Let $\calP$ be a directly regular polytope of type $\{p_1, \ldots, p_n\}$ and let $\calQ$ be a 
		directly regular polytope
		of type $\{q_1, \ldots, q_n\}$. Suppose that $\gcd(p_i, q_i)$ is odd for all $i$, and that
		it is $1$ for at least one $i$. Then $\GP \comix \GQ$ is trivial, and thus $\GP \mix \GQ = \GP \times \GQ$.
		\end{corollary}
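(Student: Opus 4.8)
The plan is to prove that every generator $\s_i$ of $\GcPQ$ equals $\eps$, so that $\GcPQ$ is the trivial group; the assertion $\GPQ = \GP \times \GQ$ then follows exactly as in \cref{cor:flags-mix}.

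The first step is a numerical observation. Since $\GcPQ$ is obtained from $\GP$ by adjoining (suitably relabelled) the relators of $\GQ$, and symmetrically from $\GQ$ by adjoining those of $\GP$, the group $\GcPQ$ is a common quotient of $\GP$ and of $\GQ$; equivalently, $\GcPQ = W^+/MK$ by \pref{prop:comix}, where $MK \supseteq M$ and $MK \supseteq K$. Hence the relations $\s_i^{p_i} = \eps$ and $\s_i^{q_i} = \eps$, valid in $\GP$ and in $\GQ$ respectively, both hold in $\GcPQ$, so the order of $\s_i$ in $\GcPQ$ divides $\gcd(p_i, q_i)$. By hypothesis this number is odd for every $i$.

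Now let $i_0$ be an index with $\gcd(p_{i_0}, q_{i_0}) = 1$; then $\s_{i_0}$ has order dividing $1$, i.e.\ $\s_{i_0} = \eps$ in $\GcPQ$. I would then propagate triviality outward from $i_0$ using \pref{prop:trivial}: if $\s_i = \eps$ and $i$ is not an end index, then \pref{prop:trivial} forces each of $\s_{i-1}$ and $\s_{i+1}$ to have order $1$ or $2$; but the order of $\s_{i\pm1}$ also divides the odd number $\gcd(p_{i\pm1}, q_{i\pm1})$, and so $\s_{i-1} = \s_{i+1} = \eps$. A straightforward induction on the distance from $i_0$ (omitting the left- or right-hand conclusion when $i$ is an end index) gives $\s_i = \eps$ for all $i$, hence $\GcPQ = \langle \s_1, \ldots, \s_n \rangle$ is trivial.

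Finally, with $\GcPQ$ trivial, \pref{prop:comix} gives $MK = W^+$, where $\GP = W^+/M$ and $\GQ = W^+/K$; then by \pref{prop:mix} we get $\GPQ = W^+/(M\cap K) \cong W^+/M \times W^+/K = \GP \times \GQ$ via the standard isomorphism for a group expressed as the product of two normal subgroups (this is \cref{cor:flags-mix} in the finite case, and the argument is identical in general). The only genuine point to watch is the claim that the order of $\s_i$ in $\GcPQ$ divides $\gcd(p_i,q_i)$, together with the bookkeeping at the two ends of the string; neither is a real obstacle.
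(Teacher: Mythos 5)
Your proposal is correct and follows essentially the same route as the paper: kill $\s_{i_0}$ at an index where the gcd is $1$, then propagate triviality outward along the string via \pref{prop:trivial} combined with the oddness of each $\gcd(p_i,q_i)$. The only difference is that you spell out the two supporting facts the paper leaves implicit (that the order of $\s_i$ in $\GcPQ$ divides $\gcd(p_i,q_i)$ because $\GcPQ$ is a common quotient, and that triviality of the comix yields $\GPQ \cong \GP \times \GQ$ even without finiteness), which is a harmless and welcome elaboration.
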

		
		\begin{proof}
		Fix a $k$ such that $\gcd(p_k, q_k) = 1$. Then $\s_k$ is trivial in $\GP \comix \GQ$.
		Now, by \pref{prop:trivial}, if $k \geq 2$, then $\s_{k-1}$ has
		order $1$ or $2$. On the other hand, $\s_{k-1}$ also has order dividing $\gcd(p_{k-1}, q_{k-1})$,
		which is odd. Therefore, $\s_{k-1}$ is trivial. Proceeding in this manner,
		we conclude that all of the generators $\s_i$ for $i < k$ are trivial. Similarly, if $k \leq n-2$,
		then $\s_{k+1}$ must be trivial by the same reasoning, and we find that all the generators
		are trivial.
		\end{proof}
		
	\subsection{Polytopality of the Mix}
	
		In order for the mix of $\calP$ and $\calQ$ to be a polytope, we need for the group 
		$\GP \mix \GQ$ to satisfy the intersection property (\eref{eq:chiral-int}). We naturally would like to
		have simple conditions that determine when this is the case. 
		We start with two results from \cite{const}.
		
		\begin{proposition}
		\label{prop:facets-cover}
		Let $\calP$ be a directly regular $n$-polytope with facets isomorphic to $\calK$.
		Let $\calQ$ be a directly regular flag-connected $n$-pre-polytope with facets isomorphic to $\calK'$.
		If $\calK$ covers $\calK'$ or $\calK'$ covers $\calK$, then $\calP \mix \calQ$ is polytopal.
		\end{proposition}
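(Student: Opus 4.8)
The plan is to reduce the polytopality of $\calP \mix \calQ$ to verifying the intersection property (\eref{eq:chiral-int}) for $\G^+(\calP) \mix \G^+(\calQ)$, and to leverage the fact that we already know the facets of the mix are $\calK \mix \calK'$ together with the hypothesis that one of $\calK, \calK'$ covers the other. First I would recall that, by symmetry of the mixing operation, it suffices to treat the case where $\calK$ covers $\calK'$; then the facet of the mix, $\calK \mix \calK'$, is covered by $\calK$, and in fact one expects $\calK \mix \calK' \simeq \calK$ since covering $M \leq K$ (in the notation of \pref{prop:mix} applied to the facets) gives $M \cap K = M$. Thus the facets of $\calP \mix \calQ$ are isomorphic to $\calK$, which is already a polytope, so its rotation group satisfies its own intersection property. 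The key point is that the intersection property for an $n$-pre-polytope built from a rotation group can be checked ``one step at a time'': if the facet group $\G^+_{\{0,\ldots,n-2\}}$ satisfies the intersection property, then it suffices to verify the single additional instance
\[
\G^+_{\{0,\ldots,n-2\}} \cap \G^+_{\{1,\ldots,n-1\}} = \G^+_{\{1,\ldots,n-1\}}.
\]
Wait — more precisely, the remaining instances of (\eref{eq:chiral-int}) that are not already implied by the facet's intersection property reduce to controlling intersections of the ``bottom'' subgroup (the facet group) with subgroups that involve the top generator $\beta_{n-1}$.

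The main steps, in order, would be: (1) invoke symmetry to assume $\calK$ covers $\calK'$; (2) show $\calK \mix \calK' \simeq \calK$, hence the facets of $\calP \mix \calQ$ are the polytope $\calK$, so that $\G^+(\calP \mix \calQ)_{\{0,\ldots,n-2\}}$ has the intersection property; (3) invoke the quotient criterion — $\G^+(\calP \mix \calQ) = \G^+(\calP) \mix \G^+(\calQ)$ is a subgroup of $\G^+(\calP) \times \G^+(\calQ)$, and $\calP$ itself is a polytope so $\G^+(\calP)$ has the full intersection property — and use the projection $\pi_1 \colon \G^+(\calP \mix \calQ) \to \G^+(\calP)$ to pull back the missing instances of the intersection property; (4) conclude that all instances of (\eref{eq:chiral-int}) hold, so $\calP \mix \calQ$ is polytopal. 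The covering hypothesis is exactly what makes step (2) work, because it forces the facet of the mix to collapse onto $\calK$, which is where all the combinatorial control comes from.

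The hard part will be step (3): making precise how the intersection property for a mix can be deduced from the intersection property for one of its factors combined with the intersection property at the facet level. The subtlety is that $\G^+(\calP) \mix \G^+(\calQ)$ is a genuine subgroup of the direct product, not the whole thing, so an element lying in $\G^+_I$ for two index sets $I, J$ must be analyzed coordinatewise: its first coordinate lies in $\G^+(\calP)_I \cap \G^+(\calP)_J = \G^+(\calP)_{I \cap J}$ by polytopality of $\calP$, and one must then argue that this forces the whole element into $\G^+(\calP \mix \calQ)_{I \cap J}$ — and this is precisely where knowing that the facet subgroups behave correctly (via step (2)) closes the gap, since the cases not handled by the first coordinate alone are exactly those reducible to the facet. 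I expect the cleanest route is to cite the relevant lemma from \cite{const} that packages this reduction, rather than reproving it, since the statement explicitly says ``from \cite{const}.''
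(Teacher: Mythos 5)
First, note that the paper itself does not prove this proposition: it is quoted verbatim from \cite{const} (``We start with two results from \cite{const}''), so there is no in-paper argument to compare against, and your instinct at the end --- that the cleanest route is to cite the packaged lemma --- is in fact exactly what the paper does. On the merits, your core idea for the case where $\calK$ covers $\calK'$ is the right one and matches the standard proof: writing $\G^+(\calK) = W^+/M$ and $\G^+(\calK') = W^+/K$ with $M \leq K$, the facet subgroup $\langle \beta_1, \ldots, \beta_{n-2}\rangle = \G^+(\calK) \mix \G^+(\calK') \simeq W^+/(M \cap K) = W^+/M \simeq \G^+(\calK)$, so the projection $\pi_1 \colon \G^+(\calP) \mix \G^+(\calQ) \to \G^+(\calP)$ is injective on the facet subgroup; since $\G^+(\calP)$ satisfies the intersection property ($\calP$ being a polytope), the quotient criterion (the rotation-group analogue of \cite[Thm.~2E17]{arp}) yields the intersection property for the mix. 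Your step (3) is a rough but recognizable sketch of that criterion. (Minor slip: your displayed ``single additional instance'' should end in $\G^+_{\{1,\ldots,n-2\}}$, not $\G^+_{\{1,\ldots,n-1\}}$, and that particular reduction also needs the vertex-figure subgroup to satisfy its intersection property, which is not hypothesized here --- another reason the quotient criterion via $\pi_1$ is the right tool rather than the facet/vertex-figure criterion.)

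The genuine gap is your step (1): ``by symmetry of the mixing operation, it suffices to treat the case where $\calK$ covers $\calK'$.'' The hypotheses are not symmetric --- $\calP$ is assumed to be a polytope while $\calQ$ is only a flag-connected pre-polytope --- so you cannot interchange them. Concretely, if instead $\calK'$ covers $\calK$ (i.e.\ $K \leq M$), then the facet subgroup of the mix is $W^+/(M\cap K) = W^+/K \simeq \G^+(\calK')$, and the restriction of $\pi_1$ to it is the covering epimorphism $\G^+(\calK') \to \G^+(\calK)$ with kernel $M/K$, which is not injective unless $\calK \simeq \calK'$; meanwhile $\pi_2$ is injective on the facet subgroup but lands in $\G^+(\calQ)$, which is not known to satisfy the intersection property, so the quotient criterion cannot be invoked on either side. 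That case therefore needs its own argument (or one must work in the symmetric setting of \cite{const}, where both $\calP$ and $\calQ$ are polytopes and the swap is legitimate). As written, your proof establishes only half of the stated disjunction.
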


		\begin{proposition}
		\label{prop:rel-prime-type}
		Let $\calP$ be a directly regular $n$-polytope of type $\{p_1, \ldots, p_{n-1}\}$, and let
		$\calQ$ be a directly regular $n$-polytope of type $\{q_1, \ldots, q_{n-1}\}$.
		If $p_i$ and $q_i$ are relatively prime for each $i = 1, \ldots, n-1$, then $\calP \mix \calQ$ is a directly regular $n$-polytope of type $\{p_1 q_1, \ldots, p_{n-1} q_{n-1}\}$, and $\GPQ = \GP \times \GQ$.
		\end{proposition}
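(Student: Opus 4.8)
The statement packs together three assertions, and the first two are immediate. Because $\gcd(p_i,q_i)=1$ for every $i$, each of these gcds is odd and — provided $n\ge 2$, the case $n\le 1$ being trivial — at least one equals $1$; so \cref{cor:gcd-comix} applies directly and gives that $\GP\comix\GQ$ is trivial, hence $\GPQ=\GP\mix\GQ=\GP\times\GQ$. For the Schl\"afli type, $\calP\mix\calQ$ is a flag-connected pre-polytope whose $i$-th abstract rotation is $\beta_i=(\s_i,\s_i')$, so its $i$-th type entry is the order of $\beta_i$ in $\GP\times\GQ$; since $\s_i$ has order $p_i$, $\s_i'$ has order $q_i$, and $\gcd(p_i,q_i)=1$, this order is $\lcm(p_i,q_i)=p_iq_i$. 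Direct regularity of $\calP\mix\calQ$ is automatic from its construction. So everything reduces to showing that $\GPQ$ satisfies the intersection property~\eref{eq:chiral-int}, i.e.\ that $\calP\mix\calQ$ is a polytope.

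I would prove polytopality by induction on $n$, using the standard reduction criterion (the rotation-group counterpart of the string C-group criterion in \cite[Ch.~2E]{arp}): $\GPQ$ satisfies~\eref{eq:chiral-int} as soon as the facet $\calK\mix\calK'$ and the vertex-figure $\calL\mix\calL'$ of $\calP\mix\calQ$ are polytopes and $\Gamma^+(\calK\mix\calK')\cap\Gamma^+(\calL\mix\calL')=\Gamma^+(\calM\mix\calM')$ as subgroups of $\GPQ$, where $\calK,\calK'$, $\calL,\calL'$, and $\calM,\calM'$ are the facets, vertex-figures, and medial sections of $\calP$ and $\calQ$ (so $\calM\mix\calM'$ is simultaneously the vertex-figure of $\calK\mix\calK'$ and the facet of $\calL\mix\calL'$). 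The base cases $n\le 2$ are immediate — the mix of a $p_1$-gon and a $q_1$-gon is a $p_1q_1$-gon — and for $n\ge 3$ the polytopality of $\calK\mix\calK'$ and $\calL\mix\calL'$ is exactly the inductive hypothesis applied in rank $n-1$, since $\calK,\calK'$ and $\calL,\calL'$ are directly regular $(n-1)$-polytopes whose corresponding type entries are again coprime.

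The heart of the argument is the intersection condition, and what makes it work is that the three index sets it involves, $\{0,\ldots,n-2\}$, $\{1,\ldots,n-1\}$, and $\{1,\ldots,n-2\}$, are \emph{intervals}, and over an interval the mix collapses to a direct product. Indeed $\Gamma^+(\calK\mix\calK')=\langle\beta_1,\ldots,\beta_{n-2}\rangle=\Gamma^+(\calK)\mix\Gamma^+(\calK')$, and since $\calK$ and $\calK'$ have coprime types \cref{cor:gcd-comix} yields $\Gamma^+(\calK\mix\calK')=\Gamma^+(\calK)\times\Gamma^+(\calK')$ inside $\GP\times\GQ$; likewise $\Gamma^+(\calL\mix\calL')=\Gamma^+(\calL)\times\Gamma^+(\calL')$ and $\Gamma^+(\calM\mix\calM')=\Gamma^+(\calM)\times\Gamma^+(\calM')$. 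Intersecting the first two and invoking the intersection property~\eref{eq:chiral-int} for $\GP$ and for $\GQ$ (which gives $\Gamma^+(\calK)\cap\Gamma^+(\calL)=\Gamma^+(\calM)$ in $\GP$, and the same in $\GQ$) produces $(\Gamma^+(\calK)\cap\Gamma^+(\calL))\times(\Gamma^+(\calK')\cap\Gamma^+(\calL'))=\Gamma^+(\calM)\times\Gamma^+(\calM')=\Gamma^+(\calM\mix\calM')$, which is exactly what is required.

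The obstacle to watch for is the temptation to apply this for every index set at once: the identity $\Gamma^+(\calK\mix\calK')=\Gamma^+(\calK)\times\Gamma^+(\calK')$ is special to intervals and is false in general, for example for the index set $\{0,2\}$, where the corresponding subgroup of $\GP\times\GQ$ is cyclic of order $2$ while the product of the two factor subgroups has order $4$. The proof therefore really uses the fact that the reduction criterion, unwound recursively, calls only on interval subgroups, to which \cref{cor:gcd-comix} applies. The remaining checks are routine: the degenerate bookkeeping when $n=3$ (the medial-section group is then trivial, and so is the intersection in question), and pinning down the exact rotation-group reduction criterion; if one prefers not to rely on the latter, the identical computation can be run for the automorphism-group mix $\G(\calP)\mix\G(\calQ)$ together with the string C-group criterion of \cite[Ch.~2E]{arp}, the only difference being that each interval subgroup there is the index-$2$ subgroup of the relevant direct product cut out by matching the two sign homomorphisms to $\Z/2$, which does not affect the intersection computation.
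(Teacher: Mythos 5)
Your proof is correct, but there is nothing in the paper to compare it against line by line: \pref{prop:rel-prime-type} is one of the ``two results from \cite{const}'' that the paper imports without proof. What you have written is, in effect, a self-contained derivation using the machinery the paper develops \emph{after} quoting the result. Your first step ($\GcPQ$ trivial, hence $\GPQ = \GP\times\GQ$ and the type entries are $\lcm(p_i,q_i)=p_iq_i$) is exactly \cref{cor:gcd-comix} plus \cref{cor:flags-mix}. Your polytopality argument is an induction on rank whose inductive step is precisely \pref{prop:medial-sections}: the facets $\calK\mix\calK'$ and vertex-figures $\calL\mix\calL'$ are polytopal by induction (their type entries are again coprime), the interval subgroups $\G^+(\calK\mix\calK')$, $\G^+(\calL\mix\calL')$, $\G^+(\calM\mix\calM')$ are full direct products by \cref{cor:gcd-comix} applied in lower rank, and then \pref{prop:disjoint-int-prop} (your explicit intersection computation) closes the loop; the base cases are polygons and \cref{cor:polyhedra}. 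Your caveat that $\G^+_I$ of the mix equals $\Lambda_I\times\Delta_I$ only for interval index sets, with the order-$2$ versus order-$4$ example for $I=\{0,2\}$, is apt and is exactly why the paper proves only the inequality $\G^+_I\cap\G^+_J\leq\Lambda_{I\cap J}\times\Delta_{I\cap J}$ in general. The only quibble is the sentence ``direct regularity of $\calP\mix\calQ$ is automatic from its construction,'' which deserves a line (the involutions $\rho_0,\rho_0'$ induce a common involutory automorphism inverting the $\beta_i$ appropriately), but the paper itself asserts this without proof when it calls the mix a directly regular poset, so you are holding yourself to the same standard.
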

		
		In general, when we mix $\calP$ and $\calQ$, we have to verify the full intersection property. But as we 
		shall see, some parts of the intersection property are automatic. Recall that for a subset $I$ of 
		$\{0, \ldots, n-1\}$ and a 
		rotation group $\G^+ = \langle \s_1, \ldots, \s_{n-1} \rangle$, we define 
		\[ \G^+_I = \langle \tau_{i,j} \mid i \leq j \tx{ and } i-1,j \in I \rangle, \]
		where $\tau_{i,j} = \s_i \cdots \s_j$.

		\begin{proposition}
		\label{prop:disjoint-int-prop}
		Let $\calP$ and $\calQ$ be directly regular $n$-polytopes, and let $I, J \subseteq \{0, \ldots, n-1\}$. 
		Let $\Lambda = \GP$, $\Delta = \GQ$, and $\G^+ = \Lambda \mix \Delta$.
		Then $\G^+_I \cap \G^+_J \leq \Lambda_{I \cap J} \times \Delta_{I \cap J}$.
		Furthermore, if $\G^+_I = \Lambda_I \times \Delta_I$ and $\G^+_J = \Lambda_J \times \Delta_J$,
		then $\G^+_I \cap \G^+_J = \Lambda_{I \cap J} \times \Delta_{I \cap J}$.
		\end{proposition}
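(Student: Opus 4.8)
The plan is to work entirely inside the ambient group $\Lambda \times \Delta$, using the fact that $\G^+ = \Lambda \mix \Delta$ sits inside $\Lambda \times \Delta$ as the subgroup generated by the diagonal-type elements $\beta_i = (\s_i, \s_i')$. First I would establish the key containment
\[
\G^+_I \leq \Lambda_I \times \Delta_I
\]
for every $I \subseteq \{0,\ldots,n-1\}$. This is immediate from the definitions: the generator $\tau_{i,j}$ of $\G^+_I$ is $\beta_i \cdots \beta_j = (\s_i \cdots \s_j,\ \s_i' \cdots \s_j') = (\tau_{i,j}^\Lambda, \tau_{i,j}^\Delta)$, and since $i-1, j \in I$ this pair lies in $\Lambda_I \times \Delta_I$ by the very definition of $\Lambda_I$ and $\Delta_I$. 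Hence $\G^+_I \leq \Lambda_I \times \Delta_I$ and likewise $\G^+_J \leq \Lambda_J \times \Delta_J$.

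Now intersect. Using the containments just proved together with the fact that the intersection property \eqref{eq:chiral-int} holds in each of the genuine polytope rotation groups $\Lambda$ and $\Delta$ (they are polytopes by hypothesis), I get
\[
\G^+_I \cap \G^+_J \leq (\Lambda_I \times \Delta_I) \cap (\Lambda_J \times \Delta_J) = (\Lambda_I \cap \Lambda_J) \times (\Delta_I \cap \Delta_J) = \Lambda_{I \cap J} \times \Delta_{I \cap J},
\]
which is exactly the first assertion. The middle equality is the elementary fact that intersections of product subgroups of a direct product are computed coordinatewise; the last equality is \eqref{eq:chiral-int} applied in $\Lambda$ and in $\Delta$ separately.

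For the second assertion, assume $\G^+_I = \Lambda_I \times \Delta_I$ and $\G^+_J = \Lambda_J \times \Delta_J$. Then
\[
\G^+_I \cap \G^+_J = (\Lambda_I \times \Delta_I) \cap (\Lambda_J \times \Delta_J) = \Lambda_{I \cap J} \times \Delta_{I \cap J}
\]
by the same coordinatewise computation and \eqref{eq:chiral-int} as above. Combined with the reverse inclusion $\Lambda_{I\cap J} \times \Delta_{I\cap J} = \G^+_{I\cap J} \leq \G^+_I \cap \G^+_J$ — which follows because, under the hypothesis, $\G^+_{I \cap J}$ equals $\Lambda_{I\cap J}\times\Delta_{I\cap J}$ directly once one checks $I\cap J$ also satisfies the product hypothesis, or more simply because $\G^+_{I\cap J}$ is contained in both $\G^+_I$ and $\G^+_J$ and contains $\Lambda_{I\cap J}\times\Delta_{I\cap J}$ by the assumed equalities restricted appropriately — we obtain equality throughout.

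The only genuinely delicate point is the last one: I need to be careful that $\G^+_{I\cap J}$ really does contain $\Lambda_{I\cap J} \times \Delta_{I\cap J}$, since a priori the product hypothesis is only assumed for $I$ and $J$, not for $I \cap J$. The clean way around this is to observe that the generators of $\G^+_{I\cap J}$ are the elements $(\tau_{i,j}^\Lambda, \tau_{i,j}^\Delta)$ with $i-1,j \in I\cap J$, and that these lie in $\G^+_I \cap \G^+_J$; meanwhile $\G^+_I \cap \G^+_J$ has already been shown to be contained in $\Lambda_{I\cap J}\times\Delta_{I\cap J}$, so I do not in fact need the reverse containment to be automatic — I only need $\G^+_I \cap \G^+_J \geq \Lambda_{I\cap J} \times \Delta_{I\cap J}$, and for that it suffices to exhibit $\Lambda_{I\cap J}\times\Delta_{I\cap J}$ inside both $\G^+_I = \Lambda_I\times\Delta_I$ and $\G^+_J = \Lambda_J\times\Delta_J$, which is clear since $\Lambda_{I\cap J} \leq \Lambda_I \cap \Lambda_J$ and similarly for $\Delta$. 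So the argument closes, and the main obstacle is purely this bookkeeping about which index sets the product hypothesis is invoked for.
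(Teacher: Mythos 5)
Your proof is correct and follows essentially the same route as the paper's: establish $\G^+_I \leq \Lambda_I \times \Delta_I$ from the generators, intersect coordinatewise, and invoke the intersection property of $\Lambda$ and $\Delta$, with the second assertion falling out by upgrading the containments to equalities. The worry in your final paragraph is unnecessary --- once $\G^+_I = \Lambda_I \times \Delta_I$ and $\G^+_J = \Lambda_J \times \Delta_J$, the chain $(\Lambda_I \times \Delta_I) \cap (\Lambda_J \times \Delta_J) = (\Lambda_I \cap \Lambda_J) \times (\Delta_I \cap \Delta_J) = \Lambda_{I\cap J} \times \Delta_{I\cap J}$ is already a chain of equalities, so no separate reverse inclusion is needed.
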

		
		\begin{proof}
		Since $\G^+_I \leq \Lambda_I \times \Delta_I$ and $\G^+_J \leq \Lambda_J \times \Delta_J$, we have
		\begin{align*}
		\G^+_I \cap \G^+_J & \leq (\Lambda_I \times \Delta_I) \cap (\Lambda_J \times \Delta_J) \\
		& = (\Lambda_I \cap \Lambda_J) \times (\Delta_I \cap \Delta_J) \\
		& = \Lambda_{I \cap J} \times \Delta_{I \cap J},
		\end{align*}
		where the last line follows from the polytopality of $\calP$ and $\calQ$. This proves the first part.
		For the second part, we note that if $\G^+_I = \Lambda_I \times \Delta_I$ and $\G^+_J = \Lambda_J \times 
		\Delta_J$, then we get equality in the first line.
		\end{proof}
		
		\begin{corollary}
		\label{cor:disjoint-int-prop}
		Let $\calP$ and $\calQ$ be directly regular $n$-polytopes, and
		let $\GP \mix \GQ = \langle \beta_1, \ldots, \beta_{n-1} \rangle.$ 
		Let $1 \leq i, j \leq n-1$. Then
		\[ \langle \beta_1, \ldots, \beta_i \rangle \cap \langle \beta_j, \ldots, \beta_{n-1} \rangle
			\leq \langle \beta_j, \ldots, \beta_i \rangle. \]
		In particular, if $j > i$ then the given intersection is trivial.
		\end{corollary}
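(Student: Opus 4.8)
The plan is to recognize the three subgroups as parabolic-type subgroups of the mixed rotation group and then invoke \pref{prop:disjoint-int-prop}. Write $\Lambda = \GP$, $\Delta = \GQ$, and $\G^+ = \Lambda \mix \Delta = \langle \beta_1, \ldots, \beta_{n-1}\rangle$, and set $I = \{0,1,\ldots,i\}$ and $J = \{j-1,j,\ldots,n-1\}$. First I would verify, directly from the definition $\G^+_L = \langle \tau_{a,b} \mid a\le b \tx{ and } a-1,b\in L\rangle$ (dummy indices $a,b$), that $\G^+_I = \langle \beta_1, \ldots, \beta_i\rangle$ and $\G^+_J = \langle \beta_j, \ldots, \beta_{n-1}\rangle$; that $I\cap J = \{j-1,\ldots,i\}$ when $j\le i+1$ and is empty otherwise; and hence that in every case $\G^+_{I\cap J} = \langle \beta_j, \ldots, \beta_i\rangle$, with the understanding that this is the trivial group when $j>i$. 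The same bookkeeping identifies $\Lambda_{I\cap J}$ and $\Delta_{I\cap J}$ with $\langle \s_j,\ldots,\s_i\rangle$ inside $\Lambda$ and $\Delta$ respectively.

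With these identifications, \pref{prop:disjoint-int-prop} gives immediately
\[ \langle \beta_1, \ldots, \beta_i\rangle \cap \langle \beta_j, \ldots, \beta_{n-1}\rangle \;=\; \G^+_I \cap \G^+_J \;\le\; \Lambda_{I\cap J} \times \Delta_{I\cap J}. \]
When $j>i$ the set $I\cap J$ has at most one element, so $\Lambda_{I\cap J}$ and $\Delta_{I\cap J}$ are trivial; thus the intersection is trivial, which agrees with $\langle \beta_j,\ldots,\beta_i\rangle$ (read as the empty product of generators), and the asserted containment holds. This disposes of the ``in particular'' clause.

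It remains to treat $j\le i$, where the bound above must be sharpened from $\Lambda_{I\cap J}\times\Delta_{I\cap J}$ to its (in general proper) subgroup $\G^+_{I\cap J} = \langle \beta_j,\ldots,\beta_i\rangle$. The extra structure available is that every element of $\G^+_J$ arises by evaluating a single word in $\s_j,\ldots,\s_{n-1}$ inside $\Lambda$ and the corresponding word inside $\Delta$; so a typical element $x$ of the intersection is at once such a ``twin word'' and an element of $\Lambda_{I\cap J}\times\Delta_{I\cap J}$. I would lift $x$ to $W^+$ using \pref{prop:mix} (so $\G^+ = W^+/(M\cap K)$), and then use the intersection property \eref{eq:chiral-int} in $W^+$ together with the intersection properties of $\Lambda$ and $\Delta$ to produce a lift of $x$ lying, modulo $M\cap K$, in the parabolic $C := \langle \s_j,\ldots,\s_i\rangle$ of $W^+$; this gives $x\in\G^+_{I\cap J}$. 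I expect the crux of this step to be the permutability identity $C\cap MK = (C\cap M)(C\cap K)$: once it is known, a short manipulation of two ``approximate'' lifts $\hat c_1,\hat c_2\in C$ (one congruent to $x$ modulo $M$, the other modulo $K$) produces a single lift congruent to $x$ modulo $M\cap K$. Establishing that identity is where the polytopality of $\calP$ and $\calQ$ must enter — it can fail for an arbitrary normal subgroup of $W^+$ — and I anticipate proving it by passing to the relevant section of $\calP$ and $\calQ$ and inducting on the rank.
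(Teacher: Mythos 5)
Your first two paragraphs are, in substance, the paper's entire proof: the paper simply applies \pref{prop:disjoint-int-prop} with $I=\{0,\ldots,i\}$ and $J=\{j-1,\ldots,n-1\}$ and stops, so your bookkeeping and your disposal of the case $j>i$ (where $\Lambda_{I\cap J}\times\Delta_{I\cap J}$ is trivial) match it exactly. You are also right to be uneasy about the case $j\le i$: there \pref{prop:disjoint-int-prop} only bounds the intersection by $\Lambda_{I\cap J}\times\Delta_{I\cap J}=\langle\s_j,\ldots,\s_i\rangle\times\langle\s_j',\ldots,\s_i'\rangle$, which in general properly contains $\langle\beta_j,\ldots,\beta_i\rangle$, so the literal statement does not ``follow directly.''

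The repair you sketch in your third paragraph cannot succeed, however, because for $j\le i$ the statement you are trying to prove is false. Take $n=4$, $\calP=\{4,3,3\}$, $\calQ=\{3,3,4\}$, and $i=j=2$. By \cref{cor:gcd-comix} the facets and vertex-figures of $\calP\mix\calQ$ mix as the direct products of their components, so the second half of \pref{prop:disjoint-int-prop} gives \emph{equality} $\langle\beta_1,\beta_2\rangle\cap\langle\beta_2,\beta_3\rangle=\langle\s_2\rangle\times\langle\s_2'\rangle\cong\Z_3\times\Z_3$, which is strictly larger than $\langle\beta_2\rangle\cong\Z_3$. (Equivalently: if the corollary held for all $j\le i$, every mix with polytopal facets and vertex-figures would satisfy the intersection criterion quoted in the proof of \pref{prop:medial-sections} and hence be polytopal, contradicting \tref{thm:medial-dir-prod} and the non-polytopal entries in the paper's tables.) Consequently the permutability identity $C\cap MK=(C\cap M)(C\cap K)$ on which your plan hinges must fail; indeed, in this example the comix is trivial, so $MK=W^+$ and $C\cap MK=C=\langle\s_2\rangle$, whereas $(C\cap M)(C\cap K)=\langle\s_2^{\,3}\rangle$. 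The defensible reading of the corollary is the bound $\Lambda_{I\cap J}\times\Delta_{I\cap J}$ actually delivered by \pref{prop:disjoint-int-prop}, which coincides with $\langle\beta_j,\ldots,\beta_i\rangle$ precisely when the latter is trivial, i.e.\ when $j>i$ --- and that is the only instance the paper ever uses (in \cref{cor:polyhedra}). So: keep your first two paragraphs, and replace the third by the observation that for $j\le i$ only the weaker product bound holds.
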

		
		\begin{proof}
		The claim follows directly from \pref{prop:disjoint-int-prop} by taking $I = \{0, \ldots, i\}$ and
		$J = \{j-1, \ldots, n-1\}$.
		\end{proof}
		
		\begin{corollary}
		\label{cor:polyhedra}
		Let $\calP$ and $\calQ$ be directly regular polyhedra. Then $\calP \mix \calQ$ is a 
		directly regular polyhedron.
		\end{corollary}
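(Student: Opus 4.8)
The plan is to verify the intersection property (\eref{eq:chiral-int}) for $\G^+ := \GP \mix \GQ$ when $n = 3$; since a directly regular poset built from a rotation group satisfying both \eref{eq:tau} and the intersection property is a polytope, this suffices. For a rank-$3$ polytope, $\G^+ = \langle \beta_1, \beta_2 \rangle$, and the only subgroups $\G^+_I$ that arise for $I \subseteq \{0,1,2\}$ are the whole group, the two ``parabolic'' subgroups $\langle \beta_1 \rangle = \G^+_{\{0,1\}}$ and $\langle \beta_2 \rangle = \G^+_{\{1,2\}}$, and the trivial group $\G^+_{\{1\}} = \G^+_\emptyset$. So the only non-trivial instance of the intersection property to check is
\[ \langle \beta_1 \rangle \cap \langle \beta_2 \rangle = \{\eps\}, \]
corresponding to $I = \{0,1\}$, $J = \{1,2\}$, $I \cap J = \{1\}$.

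But this is exactly the ``in particular'' clause of \cref{cor:disjoint-int-prop}: taking $i = 1$ and $j = 2$ there, we get $\langle \beta_1 \rangle \cap \langle \beta_2 \rangle \leq \langle \beta_2, \ldots, \beta_1 \rangle$, and since $j > i$ this intersection is trivial. (Equivalently, one invokes the first part of \pref{prop:disjoint-int-prop} with $I = \{0,1\}$, $J = \{1,2\}$: then $\G^+_I \cap \G^+_J \leq \Lambda_{\{1\}} \times \Delta_{\{1\}}$, which is trivial since $\Lambda_{\{1\}}$ and $\Delta_{\{1\}}$ are trivial.) Hence $\G^+$ satisfies the intersection property, so $\calP \mix \calQ$ is a (directly regular) polyhedron.

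I do not expect any real obstacle here: the point is simply that in rank $3$ the intersection property collapses to the single statement that the two parabolic subgroups meet trivially, and that statement has already been established in full generality as a consequence of the polytopality of $\calP$ and $\calQ$. The only thing to be slightly careful about is bookkeeping: confirming that the subsets $I, J$ of $\{0,1,2\}$ genuinely produce no further constraints (e.g.\ $\G^+_{\{0,2\}} = \langle \tau_{i,j} : i \leq j,\ i-1,j \in \{0,2\}\rangle$ has no admissible generators and so is trivial), and that the direct-regularity of $\calP \mix \calQ$ in the polyhedral case is inherited from the construction — both routine. One short paragraph assembling these observations, citing \cref{cor:disjoint-int-prop} for the key intersection, completes the proof.
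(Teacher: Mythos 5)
Your proposal is correct and follows essentially the same route as the paper: reduce the intersection property in rank $3$ to the single condition $\langle \beta_1 \rangle \cap \langle \beta_2 \rangle = \{\eps\}$ and cite \cref{cor:disjoint-int-prop}. The extra bookkeeping you include (checking the remaining $\G^+_I$ and direct regularity) is fine but not needed beyond what the paper states.
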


		\begin{proof}
		In order for $\calP \mix \calQ$ to be a polyhedron (and not just a pre-polyhedron), it must satisfy the 
		intersection property. For polyhedra, the only requirement is that $\langle \beta_1 \rangle \cap 
		\langle \beta_2 \rangle = \langle \eps \rangle$, which holds by \cref{cor:disjoint-int-prop}.
		\end{proof}
		
		\cref{cor:polyhedra} is extremely useful. In addition to telling us that the mix of any two
		polyhedra is a polyhedron, it makes it simpler to verify the polytopality of the mix of $4$-polytopes,
		since the facets and vertex-figures of the mix are guaranteed to be polytopal.

		We now prove some general results that work for polytopes in any rank.
		
		\begin{proposition}
		\label{prop:medial-sections}
		Let $\calP$ be a directly regular $n$-polytope of type $\{\calK, \calL\}$ with medial sections $\calM$, and
		let $\calQ$ be a directly regular $n$-polytope of type $\{\calK', \calL'\}$ with medial sections $\calM'$.
		Suppose that $\calK \mix \calK'$ and $\calL \mix \calL'$ are polytopal. If
		$\G^+(\calM \mix \calM') = \G^+(\calM) \times \G^+(\calM')$, then $\calP \mix \calQ$ is polytopal.
		\end{proposition}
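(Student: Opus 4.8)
The plan is to verify the intersection property (\eref{eq:chiral-int}) for $\G^+ := \G^+(\calP \mix \calQ)$ directly from the definition, by reducing an arbitrary pair $I, J \subseteq \{0, \ldots, n-1\}$ to the cases already handled. Write $\Lambda = \GP$ and $\Delta = \GQ$. By \pref{prop:disjoint-int-prop}, we already have $\G^+_I \cap \G^+_J \leq \Lambda_{I \cap J} \times \Delta_{I \cap J}$, and we get the reverse inclusion (hence the full intersection property for the pair $I,J$) as soon as we know $\G^+_I = \Lambda_I \times \Delta_I$ and $\G^+_J = \Lambda_J \times \Delta_J$. So the whole proof reduces to the single statement: for every $I \subseteq \{0, \ldots, n-1\}$, one has $\G^+_I = \Lambda_I \times \Delta_I$.

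First I would dispose of the two extreme-but-easy families of subsets. If $I = \{0, \ldots, n-1\}$ is the whole index set, then $\G^+_I = \G^+$ and there is nothing to check on the right since we are not claiming $\G^+ = \Lambda \times \Delta$ in general — so actually this case must be handled differently, and the correct statement to aim for is that $\G^+_I = \Lambda_I \times \Delta_I$ holds for all \emph{proper} subsets $I$, which is exactly what the intersection property requires (one never needs $I$ or $J$ equal to the full set, since $\G^+_{\{0,\ldots,n-1\}} \cap \G^+_J = \G^+_J$ trivially). For a proper subset $I$, the subgroup $\G^+_I$ is the rotation subgroup of a \emph{section} of $\calP \mix \calQ$, and by the remark in the text that ``the facets of $\calP \mix \calQ$ are $\calK \mix \calK'$ and the vertex-figures are obtained similarly'' — applied iteratively — $\G^+_I$ is the rotation group of a mix of a section of $\calP$ with the corresponding section of $\calQ$. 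The relevant sections are built from the facet $\calK$, the vertex-figure $\calL$, and the medial section $\calM$ (and their primed counterparts), so the hypotheses $\calK \mix \calK'$ polytopal, $\calL \mix \calL'$ polytopal, and $\G^+(\calM \mix \calM') = \G^+(\calM) \times \G^+(\calM')$ are precisely what is needed to run an induction on rank.

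The core of the argument is therefore an induction on $n$. The base cases are ranks $1$ and $2$, where there is nothing to prove (polyhedra are handled by \cref{cor:polyhedra}, and lower ranks are trivial). For the inductive step, I would split the proper subsets $I$ of $\{0, \ldots, n-1\}$ into two overlapping families: those with $0 \in I$ (so $\G^+_I$ lives inside the facet mix $\calK \mix \calK'$, whose polytopality is assumed and whose rank is $n-1$, so induction applies once we also know its relevant sub-sections mix as direct products) and those with $n-1 \in I$ (handled symmetrically using the vertex-figure $\calL \mix \calL'$). The only subsets not covered are those $I$ with $0 \notin I$ and $n-1 \notin I$, i.e. $I \subseteq \{1, \ldots, n-2\}$; for these, $\G^+_I$ sits inside the rotation group of the medial section mix $\calM \mix \calM'$, and here the hypothesis $\G^+(\calM \mix \calM') = \G^+(\calM) \times \G^+(\calM')$ gives the needed direct-product splitting immediately, after which \pref{prop:disjoint-int-prop} (or rather its proof) yields $\G^+_I = \Lambda_I \times \Delta_I$ for all such $I$. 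Combining the three families covers every proper $I$, and then \pref{prop:disjoint-int-prop} delivers the full intersection property.

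The main obstacle I anticipate is bookkeeping the index-set shifts correctly when passing to sections: the generators $\tau_{i,j}$ of $\G^+_I$ must be matched up with the rotation generators of the appropriate section of the mix, and one has to be careful that ``the section's mix'' really is the mix of the corresponding sections of $\calP$ and $\calQ$ with the right generator labelling (this is the content of the iterated facet/vertex-figure remark, but it deserves an explicit check). A secondary subtlety is making sure the induction hypothesis is strong enough — it is not enough to know $\calK \mix \calK'$ is polytopal as a bare fact; one wants to know that \emph{its} $\G^+_{I'}$ split as direct products, which is why the hypotheses are phrased in terms of $\calM$, $\calK$, $\calL$ simultaneously, and why the induction should carry the direct-product statement for all proper sub-index-sets, not merely polytopality. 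Once that is set up, each of the three cases is a one- or two-line appeal to the inductive hypothesis or to \pref{prop:disjoint-int-prop}.
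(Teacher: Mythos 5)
There is a genuine gap. Your entire plan rests on establishing the intermediate claim that $\G^+_I = \Lambda_I \times \Delta_I$ for \emph{every} proper subset $I$, but this claim is strictly stronger than the proposition's hypotheses and is false in general. Already for $I = \{0, \ldots, n-2\}$ it asserts $\G^+(\calK \mix \calK') = \G^+(\calK) \times \G^+(\calK')$, whereas the proposition only assumes that $\calK \mix \calK'$ is \emph{polytopal} --- and these are genuinely different conditions (the whole point of \tref{thm:medial-dir-prod} is that the direct-product splitting of facets and vertex-figures is an \emph{extra} hypothesis beyond polytopality). For a concrete failure, take $\calK = \{4,3\}$ and $\calK' = \{4,5\}$: their mix is polytopal by \cref{cor:polyhedra}, but the comix is nontrivial (it contains an involution coming from the shared entry $4$), so by \pref{prop:SizeOfMix} the mix is a proper subgroup of the direct product. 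You acknowledge this issue as a ``secondary subtlety'' about strengthening the induction hypothesis, but it cannot be repaired by carrying a stronger hypothesis through an induction, because the stronger statement simply does not follow from what the proposition assumes. A smaller but real defect: your three families (read correctly, $n-1 \notin I$ gives subgroups of the facet group, $0 \notin I$ gives subgroups of the vertex-figure group, and $I \subseteq \{1, \ldots, n-2\}$ gives subgroups of the medial-section group) do not cover subsets containing both $0$ and $n-1$, such as $I = \{0, n-1\}$, whose group $\langle \tau_{1,n-1} \rangle$ lies in none of the three sections.

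The missing idea is the quotient criterion from \cite{chiral}, which the paper invokes: once the facets and vertex-figures of the flag-connected pre-polytope $\calP \mix \calQ$ are known to be polytopal, the full intersection property reduces to the \emph{single} condition $\langle \beta_1, \ldots, \beta_{n-2} \rangle \cap \langle \beta_2, \ldots, \beta_{n-1} \rangle = \langle \beta_2, \ldots, \beta_{n-2} \rangle$. This sidesteps all subsets $I$ other than the facet and vertex-figure index sets, so no splitting of $\G^+(\calK \mix \calK')$ or $\G^+(\calL \mix \calL')$ is ever needed. The one remaining intersection is then handled by the \emph{first} part of \pref{prop:disjoint-int-prop} (the inequality $\G^+_I \cap \G^+_J \leq \Lambda_{I \cap J} \times \Delta_{I \cap J}$, which holds unconditionally), giving an upper bound of $\G^+(\calM) \times \G^+(\calM')$, which the medial-section hypothesis identifies with $\langle \beta_2, \ldots, \beta_{n-2} \rangle$. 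Your instinct to use \pref{prop:disjoint-int-prop} is right, but you reached for its second part (which needs splitting hypotheses you do not have) instead of its first part combined with the reduction to a single intersection.
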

		
		\begin{proof}
		Let $\GP = \langle \s_1, \ldots, \s_{n-1} \rangle$ and let $\GQ = \langle \s_1', \ldots, \s_{n-1}' 
		\rangle$. Let $\beta_i = (\s_i, \s_i')$, so that $\GPQ = \langle \beta_1, \ldots, \beta_{n-1} \rangle$.
		The facets of $\calP \mix \calQ$ are $\calK \mix \calK'$, and the vertex figures are $\calL \mix \calL'$,
		both of which are polytopal. Thus, to verify the polytopality of $\calP \mix \calQ$, it suffices to
		check that $\langle \beta_1, \ldots, \beta_{n-2} \rangle \cap \langle \beta_2, \ldots, \beta_{n-1} 
		\rangle = \langle \beta_2, \ldots, \beta_{n-2} \rangle$ \cite{chiral}. From \pref{prop:disjoint-int-prop}
		we get that 
		\[ \langle \beta_1, \ldots, \beta_{n-2} \rangle \cap \langle \beta_2, \ldots, \beta_{n-1} 
		\rangle \leq \langle \s_2, \ldots, \s_{n-2} \rangle \times \langle \s_2', \ldots, \s_{n-2}' \rangle. \]
		The right hand side is just $\G^+(\calM) \times \G^+(\calM')$, and since this is equal to
		$\G^+(\calM \mix \calM') = \langle \beta_2, \ldots, \beta_{n-2} \rangle$, the result follows.
		\end{proof}

		\begin{theorem}
		\label{thm:medial-dir-prod}
		Let $\calP$ be a directly regular $n$-polytope of type $\{\calK, \calL\}$ with medial sections $\calM$, and
		let $\calQ$ be a directly regular $n$-polytope of type $\{\calK', \calL'\}$ with medial sections $\calM'$.
		Suppose that $\calK \mix \calK'$ and $\calL \mix \calL'$ are polytopal, and suppose
		that $\G^+(\calK \mix \calK') = \G^+(\calK) \times \G^+(\calK')$ and
		that $\G^+(\calL \mix \calL') = \G^+(\calL) \times \G^+(\calL')$. Then
		$\calP \mix \calQ$ is polytopal if and only if $\G^+(\calM \mix \calM') =
		\G^+(\calM) \times \G^+(\calM')$.
		\end{theorem}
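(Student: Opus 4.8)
The plan is to deduce this from \pref{prop:medial-sections} together with \pref{prop:disjoint-int-prop}, reducing everything to the single intersection
\[ \langle \beta_1, \ldots, \beta_{n-2} \rangle \cap \langle \beta_2, \ldots, \beta_{n-1} \rangle = \langle \beta_2, \ldots, \beta_{n-2} \rangle, \]
which (given that the facets $\calK \mix \calK'$ and vertex-figures $\calL \mix \calL'$ are already polytopal) is the only remaining clause of the intersection property for $\calP \mix \calQ$. The ``if'' direction is exactly \pref{prop:medial-sections}, so only the ``only if'' direction needs work: assuming $\calP \mix \calQ$ is polytopal, I must show $\G^+(\calM \mix \calM') = \G^+(\calM) \times \G^+(\calM')$.

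First I would set up notation as in the earlier proofs: write $\GP = \langle \s_1, \ldots, \s_{n-1}\rangle$, $\GQ = \langle \s_1', \ldots, \s_{n-1}'\rangle$, $\beta_i = (\s_i, \s_i')$, and put $\Lambda = \GP$, $\Delta = \GQ$. Take $I = \{0,\ldots,n-2\}$ and $J = \{1,\ldots,n-1\}$, so that $\G^+_I = \langle \beta_1,\ldots,\beta_{n-2}\rangle$, $\G^+_J = \langle \beta_2,\ldots,\beta_{n-1}\rangle$, and $I \cap J = \{1,\ldots,n-2\}$ with $\G^+_{I\cap J} = \langle \beta_2, \ldots, \beta_{n-2}\rangle = \G^+(\calM \mix \calM')$. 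The hypotheses say precisely that $\G^+_I = \G^+(\calK \mix \calK') = \G^+(\calK) \times \G^+(\calK') = \Lambda_I \times \Delta_I$, and similarly $\G^+_J = \Lambda_J \times \Delta_J$. Now invoke the second part of \pref{prop:disjoint-int-prop}: under exactly these hypotheses, $\G^+_I \cap \G^+_J = \Lambda_{I\cap J} \times \Delta_{I\cap J}$, which is $\G^+(\calM) \times \G^+(\calM')$.

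On the other hand, polytopality of $\calP \mix \calQ$ means $\G^+_I \cap \G^+_J = \G^+_{I \cap J} = \G^+(\calM \mix \calM')$ (this is the intersection property clause quoted above, which for a flag-connected pre-polytope whose facets and vertex-figures are polytopal is equivalent to polytopality, by the criterion from \cite{chiral} used in \pref{prop:medial-sections}). Combining the two computations of $\G^+_I \cap \G^+_J$ gives $\G^+(\calM \mix \calM') = \G^+(\calM) \times \G^+(\calM')$, as desired. This completes the ``only if'' direction; the ``if'' direction is \pref{prop:medial-sections} verbatim, so I would just cite it.

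I do not expect a serious obstacle here: the whole argument is bookkeeping with \pref{prop:disjoint-int-prop} and the reduction of the intersection property to its top clause. The one point requiring a little care is making sure that, once the facets and vertex-figures are known to be polytopal, the remaining intersection condition $\langle \beta_1,\ldots,\beta_{n-2}\rangle \cap \langle \beta_2,\ldots,\beta_{n-1}\rangle = \langle \beta_2,\ldots,\beta_{n-2}\rangle$ is both necessary and sufficient for polytopality of $\calP \mix \calQ$ — this is the standard string C-group criterion (cf.\ the quotient criterion in \cite{arp} and its rotational analogue in \cite{chiral}) and is already used this way in the proof of \pref{prop:medial-sections}, so I would simply appeal to it in both directions rather than reprove it.
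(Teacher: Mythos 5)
Your proposal is correct and follows essentially the same route as the paper: the ``if'' direction is cited from \pref{prop:medial-sections}, and the ``only if'' direction computes the intersection $\langle \beta_1,\ldots,\beta_{n-2}\rangle \cap \langle \beta_2,\ldots,\beta_{n-1}\rangle$ in two ways, once via the second part of \pref{prop:disjoint-int-prop} and once via the intersection property of the polytopal mix. The paper's proof is the same bookkeeping argument, just written without naming the index sets $I$ and $J$ explicitly.
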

		
		\begin{proof}
		Let $\GP = \langle \s_1, \ldots, \s_{n-1} \rangle$ and let $\GQ = \langle \s_1', \ldots, \s_{n-1}' 
		\rangle$. Let $\beta_i = (\s_i, \s_i')$, so that $\GPQ = \langle \beta_1, \ldots, \beta_{n-1} \rangle$.
		\pref{prop:medial-sections} proves that if $\G^+(\calM \mix \calM') = \G^+(\calM) \times \G^+(\calM')$,
		then $\calP \mix \calQ$ is polytopal. Conversely, suppose $\calP \mix \calQ$ is polytopal.
		Since $\calP \mix \calQ$ is polytopal, we have that
		\[ \langle \beta_2, \ldots, \beta_{n-2} \rangle = \langle \beta_1, \ldots, \beta_{n-2} \rangle
		\cap \langle \beta_2, \ldots, \beta_{n-1} \rangle. \]
		Now, the right-hand side is $\G^+(\calK \mix \calK') \cap \G^+(\calL \mix \calL')$. Since
		$\G^+(\calK \mix \calK') = \G^+(\calK) \times \G^+(\calK')$ and
		$\G^+(\calL \mix \calL') = \G^+(\calL) \times \G^+(\calL')$, we can apply 
		\pref{prop:disjoint-int-prop} to see that the right hand side is equal to $\langle \s_2, \ldots, \s_{n-2} 
		\rangle \times \langle \s_2', \ldots \s_{n-2}' \rangle$, which is $\G^+(\calM) \times \G^+(\calM')$.
		Since the left-hand side is equal to $\G^+(\calM \mix \calM')$, we get that $\G^+(\calM \mix \calM') =
		\G^+(\calM) \times \G^+(\calM')$, as desired.
		\end{proof}

\section{The Mix of the Regular Convex Polytopes}

	Now we will actually mix the convex polytopes. In each case, we will determine the number of flags,
	the number of faces of each rank, and whether the mix is polytopal. The Schl\"afli type of the mix
	is easily obtained by taking the least common multiple of the corresponding entries in all the component
	polytopes. In most cases, the results have been found in two ways: by judicious use of the preceding results, 
	and by direct calculation using GAP \cite{gap}. 

	\subsection{Rank 3}
	
		The five regular convex polyhedra are the tetrahedron $\{3, 3\}$, the octahedron $\{3, 4\}$, the
		icosahedron $\{3, 5\}$, the cube $\{4, 3\}$, and the dodecahedron $\{5, 3\}$. By \cref{cor:polyhedra},
		the mix of any number of these is a polyhedron (i.e., polytopal). For the mix of two regular convex
		polyhedra $\calP$ and $\calQ$, \cref{cor:gcd-comix} shows that in every case, we get
		$\GP \mix \GQ = \GP \times \GQ$. By carefully grouping polytopes, we can use \cref{cor:gcd-comix}
		for the mix of three or more regular convex polyhedra as well. 
		For example, since $\{3, 3\} \mix \{3, 4\}$ is of type $\{3, 12\}$, we can apply
		\cref{cor:gcd-comix} to $(\{3, 3\} \mix \{3, 4\}) \mix \{3, 5\}$.
		
		Due to the above considerations, we can always apply \cref{cor:flags-mix} to find the number
		of flags in the mix. To find the remaining information, we note that if $\calP$ is a regular polyhedron of 
		type $\{p, q\}$ with $g$ flags, then $\calP$ has $g/(2q)$ vertices, $g/4$ edges, and $g/(2p)$ facets.

		Information about the mix of the regular convex polyhedra is summarized in \taref{tab:polyhedra}, where
		$f_0$ is the number of vertices, $f_1$ is the number of edges, $f_2$ is the number of $2$-faces,
		and $g$ is the size of the automorphism group (which is also the number of flags).
		Since $\calP \mix \calP = \calP$ for any polytope $\calP$, there are only finitely many mixes
		of the regular convex polyhedra.
		Note that whenever one of the mixes has the same number of vertices and facets, the
		polytope is in fact self-dual.
		
		\begin{table}[htbp]
		\centering
		\begin{tabular}{c c c c c}
		Polyhedron & $f_0$ & $f_1$ & $f_2$ & $g$ \\ \hline
		$\{3, 3\} \mix \{3, 4\}$ & 24  & 144  & 96  & $576$ \\ \hline
		$\{3, 3\} \mix \{3, 5\}$ & 48  & 360  & 240 & $1440$ \\ \hline
		$\{3, 3\} \mix \{4, 3\}$ & 96  & 144  & 24  & $576$ \\ \hline
		$\{3, 3\} \mix \{5, 3\}$ & 240 & 360  & 48  & $1440$ \\ \hline
		$\{3, 4\} \mix \{3, 5\}$ & 72  & 720  & 480 & $2880$ \\ \hline
		$\{3, 4\} \mix \{4, 3\}$ & 48  & 288  & 48  & $1152$ \\ \hline
		$\{3, 4\} \mix \{5, 3\}$ & 120 & 720  & 96  & $2880$ \\ \hline
		$\{3, 5\} \mix \{4, 3\}$ & 96  & 720  & 120 & $2880$ \\ \hline
		$\{3, 5\} \mix \{5, 3\}$ & 240 & 1800 & 240 & $7200$ \\ \hline
		$\{4, 3\} \mix \{5, 3\}$ & 480 & 720  & 72  & $2880$ \\ \hline
		$\{3, 3\} \mix \{3, 4\} \mix \{3, 5\}$ & 288  & 8640  & 5760 & $34560$ \\ \hline
		$\{3, 3\} \mix \{3, 4\} \mix \{4, 3\}$ & 576  & 3456  & 576  & $13824$ \\ \hline
		$\{3, 3\} \mix \{3, 4\} \mix \{5, 3\}$ & 1440 & 8640  & 1152 & $34560$ \\ \hline
		$\{3, 3\} \mix \{3, 5\} \mix \{4, 3\}$ & 1152 & 8640  & 1440 & $34560$ \\ \hline
		$\{3, 3\} \mix \{3, 5\} \mix \{5, 3\}$ & 2880 & 21600 & 2880 & $86400$ \\ \hline
		$\{3, 3\} \mix \{4, 3\} \mix \{5, 3\}$ & 5760 & 8640  & 288  & $34560$ \\ \hline
		$\{3, 4\} \mix \{3, 5\} \mix \{4, 3\}$ & 576  & 17280 & 2880 & $69120$ \\ \hline
		$\{3, 4\} \mix \{3, 5\} \mix \{5, 3\}$ & 1440 & 43200 & 5760 & $172800$ \\ \hline
		$\{3, 4\} \mix \{4, 3\} \mix \{5, 3\}$ & 2880 & 17280 & 576  & $69120$ \\ \hline
		$\{3, 5\} \mix \{4, 3\} \mix \{5, 3\}$ & 5760 & 43200 & 1440 & $172800$ \\ \hline
		$\{3, 3\} \mix \{3, 4\} \mix \{3, 5\} \mix \{4, 3\}$ & 6912  & 207360  & 34560 & $829440$  \\ \hline
		$\{3, 3\} \mix \{3, 4\} \mix \{3, 5\} \mix \{5, 3\}$ & 17280 & 518400  & 69120 & $2073600$  \\ \hline
		$\{3, 3\} \mix \{3, 4\} \mix \{4, 3\} \mix \{5, 3\}$ & 34560 & 207360  & 6912  & $829440$  \\ \hline
		$\{3, 3\} \mix \{3, 5\} \mix \{4, 3\} \mix \{5, 3\}$ & 69120 & 518400  & 17280 & $2073600$  \\ \hline
		$\{3, 4\} \mix \{3, 5\} \mix \{4, 3\} \mix \{5, 3\}$ & 34560 & 1036800 & 34560 & $4147200$  \\ \hline
		$\{3, 3\} \mix \{3, 4\} \mix \{3, 5\} \mix \{4, 3\} \mix \{5, 3\}$ & 414720
		& 12441600 & 414720 & 49766400  \\ \hline
		\end{tabular}
		\caption{The mix of regular convex polyhedra}
		\label{tab:polyhedra}
		\end{table}

	\subsection{Rank 4}

		Next we present the mix of the regular convex 4-polytopes. There are six regular convex polytopes:
		$\calP_1 = \{3, 3, 3\}$, $\calP_2
		= \{3, 3, 4\}$, $\calP_3 = \{3, 3, 5\}$, $\calP_4 = \{3, 4, 3\}$, $\calP_5 = \{4, 3, 3\}$, and
		$\calP_6 = \{5, 3, 3\}$. For $I = \{i_1, \ldots, i_k \} \subseteq \{1, 2, \ldots, 6\}$ we define 
		$\calP_I = \calP_{i_1} \mix \cdots \mix \calP_{i_k}$. As was the case in rank $3$, the mix
		of the rotation groups always turns out to be the direct product of the components. Finding
		the number of flags of the mix is then simple. To find the number $f_3$ of cells (i.e. facets) of
		the mix, we divide the number of flags by the number of flags in the facet, which we find by
		using \taref{tab:polyhedra}. Similar calculations give the number of vertices, edges, and $2$-faces.
		
		Now we address the concern of polytopality. Unlike the mix of regular convex polyhedra, there
		are mixes of regular convex $4$-polytopes that are not polytopal. 
		For the mix of two regular convex $4$-polytopes, we
		can appeal directly to \pref{prop:facets-cover} or \tref{thm:medial-dir-prod}, and this settles
		every case. For the mix of three or more regular convex $4$-polytopes, we can always group them
		in such a way as to again apply one of these results. For example, the vertex-figures of $\{3, 3, 3\}
		\mix \{3, 3, 4\}$ are $\{3, 3\} \mix \{3, 4\}$, and so they cover $\{3, 3\}$. Therefore, the mix
		$\{3, 3, 3\} \mix \{3, 3, 4\} \mix \{4, 3, 3\}$ is polytopal by \pref{prop:facets-cover}.
		Tables \ref{tab:4-poly-1} and \ref{tab:4-poly-2} summarize our results.
		
		\begin{sidewaystable}[htbp]
		\centering
		\begin{tabular}{c c c c c c c}
		$I$ & $f_0$ & $f_1$ & $f_2$ & $f_3$ & $g$ & Polytopal?\\ \hline
		$\{1, 2\}$ & 40     & 480     & 1920    & 960    & $23040$     & Y\\ \hline
		$\{1, 3\}$ & 600    & 14400   & 72000   & 36000  & $864000$    & Y\\ \hline
		$\{1, 4\}$ & 120    & 5760    & 5760    & 120    & $69120$     & Y\\ \hline
		$\{1, 5\}$ & 960    & 1920    & 480     & 40     & $23040$     & Y\\ \hline
		$\{1, 6\}$ & 36000  & 72000   & 14400   & 600    & $864000$    & Y\\ \hline
		$\{2, 3\}$ & 960    & 34560   & 230400  & 115200 & $2764800$   & Y\\ \hline
		$\{2, 4\}$ & 192    & 4608    & 18432   & 384    & $221184$    & Y\\ \hline
		$\{2, 5\}$ & 128    & 1536    & 1536    & 128    & $73728$     & N\\ \hline
		$\{2, 6\}$ & 4800   & 57600   & 46080   & 1920   & $2764800$   & N\\ \hline
		$\{3, 4\}$ & 2880   & 138240  & 691200  & 14400  & $8294400$   & Y\\ \hline
		$\{3, 5\}$ & 1920   & 46080   & 57600   & 4800   & $2764800$   & N\\ \hline
		$\{3, 6\}$ & 72000  & 1728000 & 1728000 & 72000  & $103680000$ & N\\ \hline
		$\{4, 5\}$ & 384    & 18432   & 4608    & 192    & $221184$    & Y\\ \hline
		$\{4, 6\}$ & 14400  & 691200  & 138240  & 2880   & $8294400$   & Y\\ \hline
		$\{5, 6\}$ & 115200 & 230400  & 34560   & 960    & $2764800$   & Y\\ \hline
		$\{1, 2, 3\}$ & 4800     & 691200    & 13824000  & 6912000  & $165888000$   & Y\\ \hline
		$\{1, 2, 4\}$ & 960      & 276480    & 1105920   & 23040    & $13271040$    & Y\\ \hline
		$\{1, 2, 5\}$ & 7680     & 92160     & 92160     & 7680     & $4423680$     & Y\\ \hline
		$\{1, 2, 6\}$ & 288000   & 3456000   & 2764800   & 115200   & $165888000$   & Y\\ \hline
		$\{1, 3, 4\}$ & 14400    & 8294400   & 41472000  & 864000   & $497664000$   & Y\\ \hline
		$\{1, 3, 5\}$ & 115200   & 2764800   & 3456000   & 288000   & $165888000$   & Y\\ \hline
		$\{1, 3, 6\}$ & 4320000  & 103680000 & 103680000 & 4320000  & $6220800000$  & Y\\ \hline
		$\{1, 4, 5\}$ & 23040    & 1105920   & 276480    & 960      & $13271040$    & Y\\ \hline
		$\{1, 4, 6\}$ & 864000   & 41472000  & 8294400   & 14400    & $497664000$   & Y\\ \hline
		$\{1, 5, 6\}$ & 6912000  & 13824000  & 691200    & 4800     & $165888000$   & Y\\ \hline
		$\{2, 3, 4\}$ & 23040    & 6635520   & 132710400 & 2764800  & $1592524800$  & Y\\ \hline
		$\{2, 3, 5\}$ & 15360    & 2211840   & 11059200  & 921600   & $530841600$   & N\\ \hline
		$\{2, 3, 6\}$ & 576000   & 82944000  & 331776000 & 13824000 & $19906560000$ & N\\ \hline
		$\{2, 4, 5\}$ & 3072     & 884736    & 884736    & 3072     & $42467328$    & N\\ \hline
		$\{2, 4, 6\}$ & 115200   & 33177600  & 26542080  & 46080    & $1592524800$  & N\\ \hline
		$\{2, 5, 6\}$ & 921600   & 11059200  & 2211840   & 15360    & $530841600$   & N\\ \hline
		$\{3, 4, 5\}$ & 46080    & 26542080  & 33177600  & 115200   & $1592524800$  & N\\ \hline
		$\{3, 4, 6\}$ & 1728000  & 995328000 & 995328000 & 1728000  & $59719680000$ & N\\ \hline
		$\{3, 5, 6\}$ & 13824000 & 331776000 & 82944000  & 576000   & $19906560000$ & N\\ \hline
		$\{4, 5, 6\}$ & 2764800  & 132710400 & 6635520   & 23040    & $1592524800$  & Y\\ \hline
		\end{tabular}
		\caption{The mix of 2 or 3 regular convex 4-polytopes}
		\label{tab:4-poly-1}
		\end{sidewaystable}

		\begin{sidewaystable}[htbp]
		\centering
		\begin{tabular}{c c c c c c c}
		$I$ & $f_0$ & $f_1$ & $f_2$ & $f_3$ & $g$ & Polytopal?\\ \hline
		$\{1, 2, 3, 4\}$ & 115200    & 398131200    & 7962624000   & 165888000 & $95551488000$    & Y\\ \hline
		$\{1, 2, 3, 5\}$ & 921600    & 132710400    & 663552000    & 55296000  & $31850496000$    & Y\\ \hline
		$\{1, 2, 3, 6\}$ & 34560000  & 4976640000   & 19906560000  & 829440000 & $1194393600000$  & Y\\ \hline
		$\{1, 2, 4, 5\}$ & 184320    & 53084160     & 53084160     & 184320    & $2548039680$     & Y\\ \hline
		$\{1, 2, 4, 6\}$ & 6912000   & 1990656000   & 1592524800   & 27648000  & $95551488000$    & Y\\ \hline
		$\{1, 2, 5, 6\}$ & 55296000  & 663552000    & 132710400    & 921600    & $31850496000$    & Y\\ \hline
		$\{1, 3, 4, 5\}$ & 2764800   & 1592524800   & 1990656000   & 6912000   & $95551488000$    & Y\\ \hline
		$\{1, 3, 4, 6\}$ & 103680000 & 59719680000  & 59719680000  & 103680000 & $3583180800000$  & Y\\ \hline
		$\{1, 3, 5, 6\}$ & 829440000 & 19906560000  & 4976640000   & 34560000  & $1194393600000$  & Y\\ \hline
		$\{1, 4, 5, 6\}$ & 165888000 & 7962624000   & 398131200    & 115200    & $95551488000$    & Y\\ \hline
		$\{2, 3, 4, 5\}$ & 368640    & 1274019840   & 6370099200   & 22118400  & $305764761600$   & N\\ \hline
		$\{2, 3, 4, 6\}$ & 13824000  & 47775744000  & 191102976000 & 331776000 & $11466178560000$ & N\\ \hline
		$\{2, 3, 5, 6\}$ & 110592000 & 15925248000  & 15925248000  & 110592000 & $3822059520000$  & N\\ \hline
		$\{2, 4, 5, 6\}$ & 22118400  & 6370099200   & 1274019840   & 368640    & $305764761600$   & N\\ \hline
		$\{3, 4, 5, 6\}$ & 331776000 & 191102976000 & 47775744000  & 13824000  & $11466178560000$ & N\\ \hline
		$\{1,2,3,4,5\}$  & 22118400  & 76441190400  & 382205952000 & 1327104000  & $18345885696000$   & Y\\ \hline
		$\{1,2,3,4,6\}$  & 829440000 & 2866544640000 & 11466178560000 & 19906560000 & $687970713600000$  & Y\\ \hline
		$\{1,2,3,5,6\}$ & 6635520000  & 955514880000   & 955514880000   & 6635520000  & $229323571200000$  & Y\\ \hline
		$\{1,2,4,5,6\}$ & 1327104000  & 382205952000   & 76441190400    & 22118400    & $18345885696000$   & Y\\ \hline
		$\{1,3,4,5,6\}$ & 19906560000 & 11466178560000 & 2866544640000  & 829440000   & $687970713600000$  & Y\\ \hline
		$\{2,3,4,5,6\}$ & 2654208000  & 9172942848000  & 9172942848000  & 2654208000  & $2201506283520000$ & N\\ \hline
		$\{1,2,3,4,5,6\}$ & 159252480000 & 550376570880000 & 550376570880000 & 159252480000 & $132090377011200000$ & Y\\ \hline
		\end{tabular}
		\caption{The mix of 4 or more regular convex 4-polytopes}
		\label{tab:4-poly-2}
		\end{sidewaystable}

	\subsection{Ranks 5 and higher}
	
		In ranks 5 and higher, the only regular convex polytopes are the $n$-simplex
		$T^n := \{3^{n-1}\}$, the $n$-cube $B^n := \{4, 3^{n-2}\}$, and the $n$-cross-polytope 
		$C^n := \{3^{n-2}, 4\}$. We will determine the group of their mix and which of the mixes
		are polytopal, as well as the number of faces in each rank.
		
		\begin{theorem}
		\label{thm:convex-dir-prod}
		For $n \geq 3$, we have
		\begin{align*}
		\G^+(T^n \mix B^n) &= \G^+(T^n) \times \G^+(B^n) \\
		\G^+(T^n \mix C^n) &= \G^+(T^n) \times \G^+(C^n) \\
		\G^+(B^n \mix C^n) &= \G^+(B^n) \times \G^+(C^n) \\
		\G^+(T^n \mix B^n \mix C^n) &= \G^+(T^n) \times \G^+(B^n) \times \G^+(C^n).
		\end{align*}
		\end{theorem}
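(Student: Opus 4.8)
The plan is to settle the three two-fold identities directly with \cref{cor:gcd-comix}, and then to reach the three-fold identity by bootstrapping through the associativity of the mixing operation. For the first three lines I would simply compute componentwise greatest common divisors of the Schl\"afli types $T^n = \{3,\ldots,3\}$, $B^n = \{4,3,\ldots,3\}$, $C^n = \{3,\ldots,3,4\}$: for $(T^n,B^n)$ the list of gcd's is $(1,3,\ldots,3)$, for $(T^n,C^n)$ it is $(3,\ldots,3,1)$, and for $(B^n,C^n)$ it is $(1,3,\ldots,3,1)$ (with no middle entries when $n=3$). In every case each entry is odd and at least one entry is $1$, so \cref{cor:gcd-comix} applies and yields both that the comix is trivial and that the mix of rotation groups is the direct product.

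For the last line I would use associativity to write $T^n \mix B^n \mix C^n = (T^n \mix B^n) \mix C^n$ and put $\calR := T^n \mix B^n$. By the first paragraph, and because the mix of directly regular polytopes is always a directly regular flag-connected pre-polytope, $\calR$ has rotation group $\G^+(T^n) \times \G^+(B^n)$ with rotation generators $\beta_i = (\s_i,\s_i')$; since $T^n$ and $B^n$ are genuine polytopes, $\beta_i$ has order equal to the $\lcm$ of the $i$-th entries of their types, so the ``type'' of $\calR$ is $\{12,3,\ldots,3\}$. The componentwise gcd of $\{12,3,\ldots,3\}$ with the type $\{3,\ldots,3,4\}$ of $C^n$ is $(3,\ldots,3,1)$ --- again all odd with a terminal $1$. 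Now I would observe that the proofs of \pref{prop:trivial} and \cref{cor:gcd-comix} use nothing about their two polytopes beyond the relations $(\s_{i-1}\s_i)^2 = \eps$ in $W^+$ and the orders of the rotation generators; running that same downward induction --- $\s_{n-1}$ is trivial in $\G^+(\calR) \comix \G^+(C^n)$ because $\gcd(3,4)=1$, hence $\s_{n-2}$ has order dividing $\gcd(2,3)=1$, and so on down to $\s_1$ --- shows $\G^+(\calR) \comix \G^+(C^n)$ is trivial. Finally, \pref{prop:SizeOfMix} (whose proof is again purely group-theoretic) applied to the finite pre-polytope $\calR$ and the polytope $C^n$ forces $|\G^+(\calR \mix C^n)| = |\G^+(\calR)|\cdot|\G^+(C^n)| = |\G^+(T^n)|\cdot|\G^+(B^n)|\cdot|\G^+(C^n)|$; since $\G^+(\calR \mix C^n)$ is by construction a subgroup of $\G^+(\calR) \times \G^+(C^n) = \G^+(T^n) \times \G^+(B^n) \times \G^+(C^n)$, matching these finite cardinalities upgrades the inclusion to equality.

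The only genuine obstacle is bureaucratic: \cref{cor:gcd-comix}, \pref{prop:trivial}, and \pref{prop:SizeOfMix} are stated for polytopes, while the last step feeds them the pre-polytope $\calR$. I expect to clear this by remarking that every proof involved is phrased in terms of quotients $W^+/M$, $W^+/K$ and never uses the intersection property, so it transfers unchanged to directly regular flag-connected pre-polytopes; alternatively one can avoid pre-polytopes altogether, working with the normal subgroups $M_T, M_B, M_C \trianglelefteq W^+$ with $\G^+(T^n)=W^+/M_T$, etc., and proving $M_T M_B = W^+$ and then $(M_T\cap M_B)M_C = W^+$ by the same order induction, whence $W^+/(M_T\cap M_B\cap M_C)$ has the product order. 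It is worth noting in passing that the tempting shortcut --- ``all three pairwise comixes are trivial, therefore the three-fold mix is the full direct product'' --- is not valid: pairwise comaximality of three normal subgroups does not force the three-factor Chinese Remainder isomorphism (the Klein four-group with its three subgroups of order two is a counterexample), so the associativity step really does work.
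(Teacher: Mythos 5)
Your proof is correct and follows essentially the same route as the paper's: the three pairwise identities come from \cref{cor:gcd-comix}, and the triple mix is handled by associating as $(T^n \mix B^n) \mix C^n$, of type $\{12, 3^{n-2}\}$ against $\{3^{n-2}, 4\}$, and applying the corollary again. Your added care about feeding the pre-polytope $T^n \mix B^n$ into results stated for polytopes, and your remark that pairwise triviality of the comixes alone would not yield the three-factor product, are both points the paper passes over silently.
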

		
		\begin{proof}
		The first three follow immediately from \cref{cor:gcd-comix}. For the last one, we note
		that $T^n \mix B^n$ is of type $\{12, 3^{n-2}\}$ while $C^n$ is of type $\{3^{n-2}, 4\}$,
		so Corollary~\ref{cor:gcd-comix} applies again.
		\end{proof}

		\begin{theorem}
		For $n \geq 4$, $T^n \mix B^n$, $T^n \mix C^n$, and $T^n \mix B^n \mix C^n$ are all polytopal,
		and $B^n \mix C^n$ is not.
		\end{theorem}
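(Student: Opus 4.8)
The plan is to obtain the three polytopal statements from \pref{prop:facets-cover} together with duality, and the negative statement from \tref{thm:medial-dir-prod}.

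For the positive cases: the facets of $T^n$ and of $C^n$ are both copies of $T^{n-1}$, so \pref{prop:facets-cover} shows at once that $T^n \mix C^n$ is polytopal, and the same argument shows $T^m \mix C^m$ is polytopal for every $m \geq 3$ (when $m=3$, \cref{cor:polyhedra} applies in any case). Since $(T^m)^{\ast} = T^m$ and $(B^m)^{\ast} = C^m$, and since the dual of the mix of two directly regular polytopes is the mix of their duals while the intersection property is self-dual, $T^m \mix B^m$ is polytopal for every $m \geq 3$ as well; in particular $T^n \mix B^n$ is polytopal. For the triple mix I would write $T^n \mix B^n \mix C^n = (T^n \mix B^n) \mix C^n$: the facets of the polytope $T^n \mix B^n$ are $T^{n-1} \mix B^{n-1}$, which cover the facet $T^{n-1}$ of $C^n$ (a mix covers each of its factors), so \pref{prop:facets-cover} applies a third time.

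For $B^n \mix C^n$ I would apply \tref{thm:medial-dir-prod} with $\calP = B^n$ and $\calQ = C^n$. Reading off Schl\"afli symbols, the facets here are $B^{n-1}$ and $T^{n-1}$, the vertex-figures are $T^{n-1}$ and $C^{n-1}$, and the medial sections of both $B^n$ and $C^n$ are $T^{n-2}$. By the previous paragraph the facet-mix $B^{n-1} \mix T^{n-1}$ and the vertex-figure-mix $T^{n-1} \mix C^{n-1}$ are polytopal, and by \tref{thm:convex-dir-prod} their rotation groups are the full direct products $\G^+(B^{n-1}) \times \G^+(T^{n-1})$ and $\G^+(T^{n-1}) \times \G^+(C^{n-1})$. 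Thus the hypotheses of \tref{thm:medial-dir-prod} are met, and $B^n \mix C^n$ is polytopal if and only if $\G^+(\calM \mix \calM') = \G^+(\calM) \times \G^+(\calM')$, where $\calM = \calM' = T^{n-2}$. But $\calM \mix \calM' = T^{n-2}$, so the left-hand group has order $|\G^+(T^{n-2})|$ while the right-hand group has order $|\G^+(T^{n-2})|^2$; since $n \geq 4$ makes $T^{n-2}$ a polytope of rank at least $2$, hence with nontrivial rotation group, the two cannot be equal, and so $B^n \mix C^n$ fails the intersection property.

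The Schl\"afli-symbol book-keeping that identifies the facets, vertex-figures, and medial sections of $B^n$ and $C^n$, and the fact that the rank-$3$ mixes arising when $n = 4$ are polytopal by \cref{cor:polyhedra}, are routine. The step deserving real care is the reduction of the polytopality of $B^n \mix C^n$ to a condition on medial sections: one must verify \emph{every} hypothesis of \tref{thm:medial-dir-prod}, and in particular that $B^{n-1} \mix T^{n-1}$ and $T^{n-1} \mix C^{n-1}$ have rotation groups equal to the entire direct product — not merely that they are polytopal — since that is what makes the ``only if'' direction available. Once that is in place the conclusion is just the trivial comparison $|\G^+(T^{n-2})| \neq |\G^+(T^{n-2})|^2$, so I expect this hypothesis-checking, rather than any computation, to be the main obstacle.
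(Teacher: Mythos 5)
Your proposal is correct and follows essentially the same route as the paper: \pref{prop:facets-cover} (via facets/vertex-figures) for the three positive cases, the covering of $T^{n-1}$ by the facets of $T^n \mix B^n$ for the triple mix, and \tref{thm:medial-dir-prod} applied to the common medial section $T^{n-2}$ for the negative case. Your extra care in verifying the direct-product hypotheses of \tref{thm:medial-dir-prod} via \tref{thm:convex-dir-prod} is exactly what the paper does, just stated more explicitly.
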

		
		\begin{proof}
		Since $T^n$ has the same vertex-figures as $B^n$, and the same
		facets as $C^n$, then the mix with either of these polytopes
		is polytopal by Proposition~\ref{prop:facets-cover}. Now, consider the facets of $T^n \mix B^n$.
		These facets are of type $T^{n-1} \mix B^{n-1}$; in particular, they cover $T^{n-1}$.
		Therefore, the facets of $T^n \mix B^n$ cover the facets of $C^n$, and thus the mix
		$T^n \mix B^n \mix C^n$ is polytopal. For the final case, we note that the facets of
		$B^n \mix C^n$ are $B^{n-1} \mix T^{n-1}$ and that the vertex-figures are $T^{n-1}
		\mix C^{n-1}$. By \tref{thm:convex-dir-prod}, we see that the facets and
		the vertex-figures are both direct products of their components. Then by
		Theorem~\ref{thm:medial-dir-prod}, we see that $B^n \mix C^n$ is polytopal if and
		only if the medial sections also mix as the direct product. But the medial sections
		of $B^n$ and of $C^n$ are both $T^{n-2}$, and thus the mix of the medial sections
		is also $T^{n-2}$. Therefore, $B^n \mix C^n$ is not polytopal.
		\end{proof}

		Now we calculate the number $g$ of flags and the numbers $f_k$ of $k$-faces of each of the mixes. 
		\taref{tab:reg-convex} summarizes this information for $T^n$, $B^n$, and $C^n$.
		
		\begin{table}[htbp]
		\centering
		\begin{tabular}{c c c}
		$\calP$ & $f_k$ & $g$ \\ \hline
		$T^n$ & $\dbinom{n+1}{k+1}$ & $(n+1)!$ \\ \hline
		$B^n$ & $2^{n-k} \dbinom{n}{k}$ & $2^n n!$ \\ \hline
		$C^n$ & $2^{k+1} \dbinom{n}{k+1}$ & $2^n n!$ \\ \hline
		\end{tabular}
		\caption{The regular convex $n$-polytopes for $n \geq 5$}
		\label{tab:reg-convex}
		\end{table}

		Let us calculate the number of flags in each mix. For $n=1$, all of the mixes are
		just segments, with 2 flags. For $n=2$, we get that $T^n \mix B^n = T^n \mix C^n =
		T^n \mix B^n \mix C^n = \{12\}$, with 24 flags, while $B^n \mix C^n = B^n$, with 8 flags.
		Now, for $n \geq 3$, \tref{thm:convex-dir-prod} tells us that the mixes
		are all as the direct product of the components. Then by \cref{cor:flags-mix},
		we see that $T^n \mix B^n$ and $T^n \mix C^n$ both have $2^{n-1} n! (n+1)!$ flags,
		that $B^n \mix C^n$ has $2^{2n-1} (n!)^2$ flags, and that $T^n \mix B^n \mix C^n$
		has $2^{2n-2} (n!)^2 (n+1)!$ flags.

		For any polytope $\calP$, let $g(\calP)$ be the number of flags of $\calP$, and let
		$f_k(\calP)$ be the number of $k$-faces of $\calP$. Then
		\[ g(\calP) = f_k(\calP) g(F_k/F_{-1}) g(F_n/F_k). \]
		We can use this to calculate $f_k(\calP)$. First, in the case $T^n \mix B^n$,
		we see that the $k$-faces are $T^k \mix B^k$, and the co-$k$-faces are $T^{n-1-k} \mix
		T^{n-1-k} = T^{n-1-k}$. Thus we have
		\[ f_k(T^n \mix B^n) = \frac{2^{n-1} n! (n+1)!}{g(T^k \mix B^k) g(T^{n-1-k})}. \]
		Now, for $k=0$, the mix $T^k \mix B^k$ has a single flag; for $k \geq 1$, it has
		$2^{k-1} k! (k+1)!$ flags. So we see that the number of vertices of $T^n \mix B^n$
		is
		\[ f_0(T^n \mix B^n) = \frac{2^{n-1} n! (n+1)!}{(1)(n!)} = 2^{n-1} (n+1)!,\]
		and that the number of $k$-faces for $k \geq 1$ is
		\[ f_k(T^n \mix B^n) = \frac{2^{n-1} n! (n+1)!}{2^{k-1} k! (k+1)! (n-k)!} = 2^{n-k} (n-k)! \binom{n+1}{k+1} \binom{n}{k}.\]
		Since $T^n \mix C^n$ is dual to this, we get that the number of facets is
		$2^{n-1} (n+1)!$, and the number of $k$-faces for $k \leq n-2$ is
		$2^{k+1} (k+1)! \binom{n+1}{k+1} \binom{n}{k+1}$.
		
		Moving on to $B^n \mix C^n$, we have that the $k$-faces are $B^k \mix T^k$, and the co-$k$-faces
		are $T^{n-1-k} \mix C^{n-1-k}$. The number of vertices is equal to the number of facets, which
		is equal to
		\[ \frac{2^{2n-1} (n!)^2}{(1)2^{n-2} (n-1)! n!} = 2^{n+1} n. \]
		The number of $k$-faces for $1 \leq k \leq n-2$ is
		\[ \frac{2^{2n-1} (n!)^2}{2^{k-1} k! (k+1)! 2^{n-k-2} (n-k-1)! (n-k)!} = 2^{n+2} \binom{n}{k} 
		\binom{n}{k+1}.\]
		
		Finally, we consider the mix $T^n \mix B^n \mix C^n$. The $k$-faces are $T^k \mix B^k$ and
		the co-$k$-faces are $T^{n-k-1} \mix C^{n-k-1}$. The number of vertices is equal to the number
		of facets, which is equal to
		\[ \frac{2^{2n-2} (n!)^2 (n+1)!}{(1)2^{n-2} (n-1)! n!} = 2^n n (n+1)!.\]
		The number of $k$-faces for $1 \leq k \leq n-2$ is
		\[ \frac{2^{2n-2} (n!)^2 (n+1)!}{2^{k-1} k! (k+1)! 2^{n-k-2} (n-k-1)! (n-k)!}
		= 2^{n+1} (n+1)! \binom{n}{k} \binom{n}{k+1}. \]
		
		We summarize these results in \taref{tab:mix-reg-convex}.

		\begin{table}[htbp]
		\centering
		\begin{tabular}{c c c c c}
		Mix & $f_0$ & $f_{n-1}$ & $f_k$ & $g$ \\ \hline 
		$T^n \mix B^n$ & $2^{n-1} (n+1)!$ & $2n(n+1)$ & $2^{n-k} (n-k)! \dbinom{n+1}{k+1} \dbinom{n}{k}$ &
		$2^{n-1} n! (n+1)!$ \\ \hline
		$T^n \mix C^n$ & $2n(n+1)$ & $2^{n-1} (n+1)!$ & $2^{k+1} (k+1)! \dbinom{n+1}{k+1} \dbinom{n}{k+1}$ &
		$2^{n-1} n! (n+1)!$ \\ \hline
		$B^n \mix C^n$ & $2^{n+1} n$ & $2^{n+1} n$ & $2^{n+2} \dbinom{n}{k} \dbinom{n}{k+1}$ &
		$2^{2n-1} (n!)^2$ \\ \hline
		$T^n \mix B^n \mix C^n$ & $2^n n (n+1)!$ & $2^n n (n+1)!$ & $2^{n+1} (n+1)! \dbinom{n}{k} \dbinom{n}{k+1}$
		& $2^{2n-2} (n!)^2 (n+1)!$ \\ \hline
		\end{tabular}
		\caption{The mix of the regular convex $n$-polytopes for $n \geq 5$}
		\label{tab:mix-reg-convex}
		\end{table}

\end{document}